\theoremstyle{definition} \newtheorem{lemma}{Lemma}[section]
\theoremstyle{definition} 
\theoremstyle{definition} \newtheorem{theorem}[lemma]{Theorem}
\theoremstyle{definition} 
\theoremstyle{remark} 
\theoremstyle{definition} \newtheorem{corollary}[lemma]{Corollary}
\theoremstyle{definition} \newtheorem{proposition}[lemma]{Proposition}
\theoremstyle{remark} \newtheorem*{notation}{Notation}
\theoremstyle{remark} \newtheorem*{acknowledgements}{Acknowledgement}
\numberwithin{lemma}{section}
\newcommand{\R}{\mathbb{R}}
\newcommand{\cl}{\overline}
\newcommand{\one}{\log}
\newcommand{\two}{\log \log}
\newcommand{\three}{\log \log \log}
\newcommand{\tw}{\tilde{w}}
\newcommand{\tp}{\tilde{\phi}}
\newcommand{\tu}{\tilde{u}}
\newcommand{\tl}{\tilde{l}}
\newcommand{\tL}{\tilde{L}}
\newcommand{\tJ}{\tilde{J}}
\newcommand{\scrl}{\mathscr{L}}
\newcounter{count}
\numberwithin{count}{lemma}
\newcounter{psi}
\newcounter{T}
\numberwithin{T}{lemma}
\renewcommand{\theT}{(T:\arabic{T})}
\newcounter{R}
\numberwithin{R}{lemma}
\renewcommand{\theR}{(R:\arabic{R})}
\numberwithin{wnconstruction}{lemma}
\renewcommand{\thepsi}{($\psi$:\arabic{psi})}
\newcommand{\beq}{\begin{equation}}
\newcommand{\eeq}{\end{equation}}
\newcommand{\bq}{\begin{equation*}}
\newcommand{\eq}{\end{equation*}}
\begin{document}

\title[A continuous Lagrangian with singular minimizer]{A one-dimensional variational problem with continuous Lagrangian and singular minimizer
}


\author{Richard Gratwick \and David Preiss       
}


\address{
              Mathematics Institute, Zeeman Building, University of Warwick, Coventry, CV4 7AL, UK. \\
}
              \email{R.T.Gratwick@warwick.ac.uk,\ D.Preiss@warwick.ac.uk}           

\date{9th April 2010}

\begin{abstract}
We construct a continuous Lagrangian, strictly convex and superlinear in the third variable, such that the associated variational problem has a Lipschitz minimizer which is non-differentiable on a dense set.  More precisely, the upper and lower Dini derivatives of the minimizer differ by a constant on a dense (hence second category) set.  In particular, we show that mere continuity is an  insufficient smoothness assumption for Tonelli's partial regularity theorem.
\end{abstract}
\maketitle
\section{Introduction}
\label{intro}
The problem of minimizing the one-dimensional variational integral
$$
\mathscr{L}(u) = \int_a^b L(t, u(t), u'(t))\, dt
$$
for some function $L \colon [a,b] \times \R \times \R \to \R$, $L \colon (t,y,p) \mapsto L(t, y, p)$, called the \emph{Lagrangian}, on a fixed bounded interval $[a,b]$ of the real line, over the class of absolutely continuous functions $u \colon [a,b] \to \R$ with prescribed boundary conditions, is now well understood.  The basic assumptions on $L$ for existence of such a minimizer are superlinearity and convexity in $p$, and minimal continuity assumptions.  This analysis was first performed by Tonelli~\cite{Ton}.
Our interest is partial regularity, on which the central result is again by Tonelli: under the assumptions that $L$ is $C^3$ and we have the slightly stronger strict convexity assumption $L_{pp} > 0$, we obtain \emph{partial regularity} of any minimizer $u \in \mathrm{AC}[a,b]$.  That is, the classical derivative of $u$ exists everywhere, with possibly infinite values, and the derivative is continuous as a map into the extended real line.  Thus the \emph{singular set}, the  set $E \subseteq [a,b]$ of points where the derivative is infinite,  is closed (and necessarily of course Lebesgue null); moreover off $E$ the minimizer $u$ inherits as much regularity as $L$ permits, i.e. $u$ is $C^k$ if $L$ is $ C^k$ for $k \geq 3$.    For a proof, see e.g. Ball and Mizel~\cite{BM}.  The book~\cite{BGH} gives a good summary of the results on  existence and partial regularity.

The most natural next question is to ask what we can know about the singular set $E$.  That minimizers of variational problems can have infinite derivative has been known since the paper of Lavrentiev~\cite{Lav}.  This presented the celebrated \emph{Lavrentiev phenomenon}, whereby when restricting the above minimization problem to even a dense subclass of the absolutely continuous functions (e.g. $C^1$ functions), the minimum value is \emph{strictly larger} than that minimum value taken over all absolutely continuous functions.  Mani\`a~\cite{Mania} gave an example of a polynomial Lagrangian superlinear in the third variable which exhibits the same phenomenon.  In such examples, the minimizer over the absolutely continuous functions has non-empty singular set $E$; Mani\`a's example has minimizer $t^{1/3}$ over domain $[0,1]$, thus $E= \{0\}$.  However, these examples do not satisfy the precise assumptions of the Tonelli partial regularity theorem, since the condition $L_{pp} > 0$ on the Lagrangian $L$ is violated (both the Lavrentiev and Mani\`a examples have $L_{pp} \geq 0$).  Thus the question of whether under the exact original conditions of the theorem, the set $E$ can be non-empty, is not answered by these examples.  However, Ball and Mizel~\cite{BM} modified Mani\`a's example to construct Lagrangians satisfying the conditions for the partial regularity theorem, i.e. in particular $L_{pp} > 0$, but with minimizers for which $E$ is non-empty.  They construct examples where $E$ consists of an end-point of the domain, and another where $E$ contains an interior point;  in the latter case, the Lavrentiev phenomenon occurs.  Davie~\cite{Davie} showed that nothing more can be said about $E$ in general by constructing  for a given arbitrary closed null set $E$ a $C^{\infty}$ Lagrangian $L$, superlinear in $p$ and with $L_{pp} > 0$, such that any minimizer (and at least one minimizer exists by Tonelli's existence result) has singular set precisely~$E$.

Some work has been done on lowering the smoothness assumptions in the partial regularity theorem.  Clarke and Vinter~\cite{CV85} prove a version of Tonelli's result under the assumptions of strict convexity and superlinearity in $p$, but requiring just that $L$ is locally Lipschitz in $(y,p)$ uniformly in $t$, and that $s \mapsto L(s, u(t), p)$ is continuous for all $(t, p)$, where $u$ is the minimizer under consideration.  They also examine a range of conditions to move to full regularity.  Their setting is in fact the vectorial case, dealing with functions $u \colon [a,b]\to \R^n$.  This example of the Tonelli regularity result is a corollary of their vectorial regularity results.  Sych\"ev~\cite{Sych91,Sych92} proves versions of the result under the usual strict convexity assumption and the condition that $L$ is (locally) H\"older continuous (in all variables).  Cs\"ornyei et. al.~\cite{uss} derive the result under the condition that a local  Lipschitz condition in $y$ holds locally uniformly in the other variables $(t,p)$.

The present paper shows that some smoothness assumption stronger than mere continuity (even in all three variables) of $L$ is necessary to obtain  partial regularity.  The main result  is the following:
\begin{theorem}\label{main}
 Let $T = e^{-e}/2$.  Then there exists Lipschitz $w \in \mathrm{AC}[-T, T]$ and continuous Lagrangian $L \colon [-T, T] \times \R \times \R \to [0, \infty)$, superlinear  in $p$ and with $L_{pp} > 0$,  such that
\begin{itemize}
 \item $w$ minimizes the associated variational problem
$$
\mathrm{AC}[-T, T] \ni u \mapsto \mathscr{L}(u) = \int_{-T}^{T} L(t, u(t), u'(t)) \,dt,
$$
over those $u \in \mathrm{AC}[-T, T]$ with $u(\pm T) = w(\pm T)$; but
   \item for dense $G_{\delta}$ (and hence second category) set $\Sigma \subseteq [-T, T]$, we have $ x \in \Sigma$ implies
$$
\overline{D}w(x) \geq 1\  \textrm{and}\ \underline{D}w(x) \leq - 1.
$$
\end{itemize}
\end{theorem}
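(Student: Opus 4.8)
The plan is to construct $w$ and $L$ \emph{simultaneously} by a recursive procedure: $w$ will arise as the uniform limit of a sequence $w_n$ of piecewise affine functions obtained by repeatedly inserting tiny ``teeth'', while $L$ will be manufactured from an accompanying Weierstrass field of mutually non-crossing curves together with a value function $S$, so that a standard calibration argument forces $w$ to minimize $\scrl$. For the first half, I would build $w = \lim_n w_n$ where $w_0$ is affine and $w_{n+1}$ is obtained from $w_n$ by replacing it, on a carefully chosen finite union of subintervals of the current partition, by a sawtooth whose slopes rise to at least $1$ and fall to at most $-1$, while its amplitude and the total length of the affected intervals are kept so small — the natural scale being governed by iterated logarithms, whence the value $T = e^{-e}/2$, chosen so that $\two(1/(2T)) = 1$ — that $\sum_n \|w_{n+1}-w_n\|_\infty < \infty$ and the slopes of all $w_n$ stay uniformly bounded, and so that freezing $w$ on a large proportion of each interval at every stage guarantees both convergence and genuine Lipschitz continuity of $w$. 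Tracking the set $\Sigma$ of points lying in an ``active'' (tooth-bearing) interval at infinitely many stages, a routine Baire-category bookkeeping makes $\Sigma$ a dense $G_\delta$, and the presence arbitrarily close to each $x \in \Sigma$, at arbitrarily small scales, of up-teeth of slope $\geq 1$ and down-teeth of slope $\leq -1$ yields $\cl{D}w(x) \geq 1$ and $\underline{D}w(x) \leq -1$.

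For the Lagrangian, in tandem with the $w_n$ I would build curves $y = \phi(t;c)$, with $c$ ranging over a parameter interval, filling a strip in the plane around the graph of $w$, never crossing one another, with $w$ recovered as (a limit of) field curve(s); alongside these, a Lipschitz function $S(t,y)$ on the strip, differentiable in $t$ for each fixed $y$, satisfying the calibration inequality
\[
L(t,y,p) \;\geq\; S_t(t,y) + S_y(t,y)\,p \qquad \text{for a.e. } (t,y) \text{ and all } p,
\]
with equality exactly along the field, i.e. when $p$ is the slope of the field curve through $(t,y)$. Concretely one defines $L$ on the strip to be the Weierstrass excess of a strictly convex superlinear density built over the field, plus the affine term $S_t(t,y) + S_y(t,y)p$, then extends it off the strip; adding a suitable convex bump in $p$ secures $L \geq 0$, $L_{pp} > 0$ and superlinearity everywhere. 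Since $w(\pm T)$ are prescribed, the calibration then gives, for any $u \in \mathrm{AC}[-T,T]$ with $u(\pm T) = w(\pm T)$,
\[
\scrl(u) = \int_{-T}^{T} L(t,u,u')\,dt \;\geq\; \int_{-T}^{T} \bigl(S_t(t,u) + S_y(t,u)u'\bigr)\,dt = S(T,w(T)) - S(-T,w(-T)) = \scrl(w),
\]
so that $w$ minimizes, provided one also checks the chain rule for $t \mapsto S(t,u(t))$ and that equality in the calibration really holds along $w$ for a.e.\ $t$ — equivalently, that $w'(t)$ equals a.e.\ the value of $p$ attaining the minimum of $p \mapsto L(t,w(t),p) - S_y(t,w(t))p$.

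The crux, and the step I expect to be the main obstacle, is the \emph{continuity of $L$} despite the field — in particular $w$ itself — having no derivative on a dense set. Naively $L(t,y,p)$ should vanish precisely when $p$ is ``the'' slope of the field curve through $(t,y)$, but no such continuous slope function can exist: if $P(\cdot,w(\cdot)) = w'$ a.e.\ with $P$ continuous, then $\frac{w(t)-w(x)}{t-x} = \frac{1}{t-x}\int_x^t P(s,w(s))\,ds \to P(x,w(x))$, making $w$ everywhere differentiable. The construction must therefore give $L(t,y,\cdot)$ a long, extremely gently curved valley — with $L_{pp}$ positive but, on the relevant scales, minuscule — whose location in $p$ varies wildly with $(t,y)$ yet, thanks to the super-fast (iterated-logarithmic) decay of the recursion scales, still assembles into a globally continuous $L$; at the same time the field curves must be prevented from ever crossing. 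A secondary technical nuisance is that, $S$ being only Lipschitz with discontinuous $y$-derivative, the chain-rule and calibration steps in the minimality argument have to be justified by hand rather than quoted. It is these interlocking demands that make the simultaneous recursion delicate; once it is engineered, verifying the four stated properties of $L$ and the two stated properties of $w$ is comparatively routine.
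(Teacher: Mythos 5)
Your first half (the construction of a Lipschitz $w$ with $\overline{D}w \geq 1$, $\underline{D}w \leq -1$ on a dense $G_\delta$ as a limit of functions with small, slow ``teeth'' at iterated-logarithmic scales) is in the spirit of the paper, although even there some care is needed: being inside an active interval at infinitely many stages does not by itself force the difference quotients \emph{from the point $x$ itself} to reach $\pm 1$; the paper arranges this by making $w$ coincide near each $x_n$ with an exact scaled copy of $\tw(t)=t\sin\three 1/|t|$, so that the quotients $(w(t)-w(x_n))/(t-x_n)$ genuinely oscillate between about $+1$ and $-1$, and then controls all later modifications near $x_n$ (Proposition~\ref{sing}). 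The real problem is the second half. Your minimality argument is a calibration/Weierstrass-field scheme, but the construction that would make it work is never carried out: you state as a goal that one must ``engineer'' a continuous $L$ whose valley in $p$ tracks a slope field containing the densely singular $w$, and your own observation -- that no continuous slope function $P$ with $P(\cdot,w(\cdot))=w'$ a.e.\ can exist -- shows the classical field construction is unavailable, so the burden of proof lies exactly in the step you defer. There is also a concrete internal obstruction in the scheme as described: ``adding a suitable convex bump in $p$'' to secure $L\geq 0$, $L_{pp}>0$ and superlinearity destroys the equality $L(t,w,w')=S_t+S_yw'$ a.e.\ along $w$ (adding, say, $\epsilon p^2$ leaves $w$ minimal only if $w$ also minimizes $\int (u')^2$, which it does not), unless the bump vanishes along the field slope $P(t,y)$ -- and since $P$ must be discontinuous, that reintroduces precisely the continuity problem for $L$. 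Finally, the Lipschitz-only verification function $S$ makes both the chain rule for $t\mapsto S(t,u(t))$ and the a.e.\ equality along $w$ nontrivial; you flag these but do not supply arguments.

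For comparison, the paper does not calibrate at all. It takes $L(t,y,p)=p^2+\phi(t,y-w(t))$, where $\phi\geq 0$ vanishes on the graph of $w$; note that with this $L$ the minimality of $w$ is genuinely in doubt (the $p^2$ term alone favours the affine competitor), and the entire Section~3 is a direct comparison argument: a convexity trick exploiting that the competitor $u$ is itself a minimizer (Lemma~\ref{lemma1}), integration by parts away from the singular points with the weight $\psi^2$ chosen to dominate $2|w_n''|$, replacement of $w_n$ by a chord on a small interval around each $x_n$ where the oscillation of $\tw$ is slow enough that the boundary-term error is absorbed by the $\psi^1$ weight (Lemmas~\ref{intphibig}--\ref{partserror}), and fallback estimates (Lemmas~\ref{lemma8}, \ref{lemma9}) when the exceptional intervals are too large, all glued by the approximation Lemma~\ref{finitejump}. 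This quantitative work is the bulk of the proof, so the claim that verification is ``comparatively routine'' once the construction is set up does not hold for any route currently on the table; as it stands your proposal identifies the central difficulty but does not overcome it, and hence has a genuine gap at the minimality step.
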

\begin{notation} We shall write $\mathrm{AC}[a,b]$ for the class of absolutely continuous functions on a closed bounded interval $[a,b] \subseteq \R$.  One can of course also think of these as (representatives from the equivalence classes of) the Sobolev functions $W^{1,1}[a,b]$.
For $f \colon \R \to \R$, we write
$$
\mathrm{Lip}(f) = \sup_{\substack{s, t, \in X \\ s \neq t}}\frac{|f(s) - f(t)|}{|s-t|}.
$$
Although of course not true in general, this will always be a finite number in our usage.  The upper and lower Dini derivatives of a function $u \in \mathrm{AC}[a,b]$ at a point $x \in [a,b]$ are given by
$$
\overline{D}u(x) = \limsup_{t \to x} \frac{u(t) - u(x)}{t-x},\quad \textrm{and}\quad \underline{D}u(x) = \liminf_{t \to x}\frac{u(t) - u(x)}{t-x}.
$$
\end{notation}
\section{The construction}

Given any sequence of points in $(-T, T)$, we can construct a Lagrangian $L$ and minimizer $w$ with the set of non-differentiability points of $w$ containing this sequence.  The construction is essentially inductive, and hinges on the fact that a certain function $\tw$  is non-differentiable at one point, but minimizes a  continuous Lagrangian.  This basic Lagrangian is of form $(t, y, p)  \mapsto \tp(t, y - \tw(t)) + p^2$ for a ``weight function'' $\tp \colon [-T, T] \times \R \to [0, \infty)$ which penalizes functions which stray from $\tw$.  That is, $\tp(t, 0) = 0$, and for $|y| \leq |z|$ we have $0 \leq \tp(t,y) \leq \tp(t, z)$, for all $t \in [-T,T]$.  This summand of the Lagrangian then takes minimum value along the graph of $\tw$, and assigns larger values to functions $u$ the further their graph lies from that of $\tw$.  This immediately gives us a one-point example of non-differentiability of a minimizer, which already suffices to provide a counter-example to any Tonelli-like partial regularity result.  Additional points of non-differentiability are included by inserting translated and scaled copies of $\tw$ into the original $\tw$, and  passing to the limit, $w$, say.  The final Lagrangian is of form $(t,y,p) \mapsto \phi(t, y - w(t)) + p^2$, where $\phi$ is a sum of translated and truncated copies $\tp_n$ of $\tp$, each of  which penalizes functions which stray from $w$ in a neighbourhood of one of the points  $x_n$ in our given sequence.  We observe that many of the technicalities of the following proof are related to guaranteeing convergence of $w$ and $L$, and are in some sense secondary to the main points of the proof.

Define $\tw\colon \R \to \R$ by
\bq
\tw(t) =
\begin{cases}
t \sin \three 1/|t| & t \neq 0 \\
0 & t= 0,
\end{cases}
\eq
so
\beq
\tw \in C^{\infty}(\R \backslash \{0\}).\label{alpha}
\eeq
Note for $t \neq 0$,
\beq
\tw'(t) = \sin \three 1/|t| - \frac{\cos \three 1/|t|}{(\two 1/|t|)( \one 1/|t|)},\label{lamda}
\eeq
and we observe of course that this is an even function.
Also note that for $t \neq 0$,
\begin{align*}
|\tw ''(t)| & \leq \frac{1}{|t|(\two 1/|t|)(\one1/|t|)}\left( 1+ \frac{ \left(2 + \two 1/|t|\right)}{(\two 1/|t|)(\one 1/|t|)}\right)
\end{align*}
and hence see that
\begin{equation}
(t)|\tw''(t)| \to 0 \ \textrm{as}\  0 < |t| \to 0. \label{wn''cont}
\end{equation}
The following functions give us for each $t \in [-T, T]$ the exact coefficients  we shall eventually need in our weight function $\tp$.  Define $\psi^1, \psi^2 \colon \R \to [0, \infty)$ by
$$
\psi^1 (t) =
\begin{cases}
\frac{402}{|t| \two (1/  5|t|)} &  t \neq 0 \\
0 & t = 0
\end{cases}
\quad\textrm{and}\quad \psi^2 (t)  =
\begin{cases} 3 + 4|w''(t)| & t \neq 0 \\
0 & t=0,
\end{cases}
$$
and so define $\psi \colon \R \to [0, \infty)$ by $\psi(t) = \psi^1(t) + \psi^2(t)$.   Note that by~\eqref{alpha} and~\eqref{wn''cont}
\begin{list}{\thepsi}{\usecounter{psi}}\item  $\psi \in C(\R \backslash \{0\})$; and\label{psi1}
\item
$ t \mapsto t\psi(t)$ defines a function in $ C(\R)$, with value 0 at 0\label{psi2}.
\end{list}
Define $C> 0$ by
\beq
C := 1 + \!\sup_{t \in [-T, T]}\! 5|t| \psi (t), \label{C}
\eeq
so~\ref{psi2} guarantees $C < \infty$.

Let  $\{ x_n\}_{n=0}^{\infty}$ be a sequence of points in $(-T,T)$.  For notational convenience, we assume $x_0 = 0$.   By our choice of $T$, we have for all $n \geq0$ and $t \in [-T, T]\backslash\{x_n\}$ that
 \begin{align}
 \frac{1}{\one 1/|t-x_n|}
 &\leq e^{-1}; \label{i}\ \\ \frac{1}{\two 1/ |t-x_n|}
 & \leq 1; \label{one}\ \textrm{and}\\   \frac{1}{\two 1/|t-x_n|}& \geq |t- x_n|.\label{tau}
 \end{align}

For each $n \geq 1$, we write
$$
\sigma_n = \min_{0 \leq i < n} |x_i - x_n| / 2 > 0.
  $$
For each $n \geq 0$ we now define the translated functions $\tw_n \colon [-T,T] \to \R$  by \linebreak $\tw_n(t) =  \tw (t- x_n)$ and $\psi_n \colon [-T,T] \to [0,\infty)$ by $\psi_n(t)= \psi(t-x_n)$.

We want to construct a sequence of Lipschitz continuous functions $w_n$ with uniformly bounded Lipschitz constant, and with  $w_n = \tw_i$ on a neighbourhood of $x_i$, thus $w_n$ is singular at $x_i$, for each $0 \leq i \leq n$.  We first define a decreasing sequence $T_n \in (0,1)$ and hence intervals $Y_n := [x_n - T_n, x_n + T_n]$.  In the inductive construction of $w_n$ we shall modify $w_{n-1}$ only on $Y_n$.

Define a sequence of constants $K_n \geq 1 $ by setting $K_0 = 1$ and so that for $n\geq 1$,
\begin{gather}
K_n \geq 1+ K_{n-1} \label{Kn2};\ \textrm{and}\\
2 \sum_{i=0}^{n-1} |\tw''_i (t)| \leq K_n\ \textrm{for $t \in [-T,T]$ such that $|x_i - t| \geq \sigma_n$ for all $0 \leq i \leq n-1$.}\label{Kn}
\end{gather} This is possible for $K_n < \infty$ by~\eqref{alpha}.

Let $T_0 = T$, so $Y_0 = [-T,T]$.  For each $n\geq 1$ we inductively define $T_n \in (0,1)$ small enough such that $Y_n := [x_n - T_n, x_n + T_n] \subseteq [-T,T]$, and the following conditions hold:
\begin{list}{\theT}{\usecounter{T}}
\item $T_n < \sigma_n$; \label{t2}
\item $T_n < T_{n-1}/2 $; \label{t3}
\item $|(t-x_n) \psi_n (t)| < 2^{-n} / 5 $ for $t \in Y_n$; and\label{t4}
\item $ T_n < K_n^{-1}$\label{t5}.
\end{list}
Note that~\ref{t4} is possible by~\ref{psi2}.  Since we only modify $w_{n-1}$ on $Y_n$ to construct $w_n$, we only need to add more weight to our Lagrangian for $t \in Y_n$.  Recalling that we are always working with translations of the same basic function $\tp$ (which we will define explicitly later), we know that we can choose the intervals $Y_n$ small enough so that summing all the extra ``weights'' we need, we still converge to a continuous function.
That the intervals of modification are small enough in this sense is the reason behind conditions~\ref{t3} and~\ref{t4}.  Since $T_0<1$,~\ref{t3} guarantees in particular that
\beq
T_n < 2^{-n}\ \textrm{for all $n\geq0$}.\label{iv}
\eeq
Condition~\ref{t2} guarantees that the points in $Y_n$ are far away from the previous~$x_i$:
\beq
|x_i - t| > \sigma_n \ \textrm{for}\ 0 \leq i <n, \ \textrm{whenever}\ t \in Y_n; \label{Ynfarfromxi}
\eeq
this stops the subintervals we later consider from overlapping.  Condition~\ref{t5} just simplifies some estimates.

We emphasize that this sequence $\{T_n\}_{n=0}^{\infty}$ is constructed independently of the later constructed $w_n$; the inductive construction of these functions will require us to pass further down the sequence of $T_n$ than induction would otherwise allow, as we now see.

For $n \geq 0$, find $m_n > n$ such that
\beq
2^{-m_n} < \frac{T_{n+1}^2}{256}\label{mn}.
\eeq
Choose an open cover  $G_n \subseteq [-T,T]$ of the points $\{x_i\}_{i=0}^{m_n}$
such that
\beq
\mathrm{meas}(G_n) \leq \frac{T_{n+1}^2 }{16C} \label{Fnmeas}.
\eeq
Now, by~\ref{psi1} we can find $1 < M_n < \infty$ such that we have
\beq
\sum_{i=0}^{m_n}\left(\max\{\psi_i (t), \psi_i (x_i + T_i)\}\right) \leq M_n \ \textrm{whenever $ t \in [-T,T] \backslash G_n$}\label{Mn}.
\eeq

Let $\epsilon_n = 2^{-n}(1- e^{-1})$.  Let $R_0 = T_0$ and for $n\geq 1$ inductively construct a decreasing sequence  $R_n \in (0,T_n]$ such that:\begin{list}{\theR}{\usecounter{R}}
\item $\frac{1}{(\two 1/R_n)(\one 1/R_n)} < \epsilon_n /2$\label{r1};
\item $R_n  < R_{n-1}/2$\label{r2}; and
\item $R_n < \frac{2^{-n}T_n^3 \epsilon_n}{128 \cdot 25M_{n-1}}$
.\label{r3}
\end{list}
Now define subintervals  $Z_n : = [x_n - R_n, x_n + R_n]$ of $Y_n$.  These intervals are those on which we aim to insert a copy of $\tw_n$ into $w_{n-1}$.  The $Z_n$ must be a very much smaller subinterval of $Y_n$ to allow the estimates we require to hold; the point of this stage in the construction is that we now let the derivative of $w_n$ oscillate on $Z_n$, so we have to make the measure of this set very small to have any control over the convergence.

\begin{lemma}\label{wnlemma}
There exists a sequence of $w_n \in \mathrm{AC}[-T,T]$ satisfying, for $ n \geq 0$:
\begin{list}{(\thecount)}{\usecounter{count}}
\item $w_n (t)= \alpha_n \tw_n(t) + \beta_n$ when $t \in [x_n - \tau_n, x_n + \tau_n]$, for some \linebreak
$\tau_n \in (0, R_n]$, some $\alpha_n \in [1,2)$, and some $\beta_n \in \R$;\label{wn=twn}
\item $w_n'$ exists and is locally Lipschitz on  $[-T,T] \backslash  \{x_i\}_{i=0}^n$;\label{wn'lip}
\item $|w_n' (t)| < 2 - \epsilon_n$ for $t \notin \{x_i\}_{i=0}^n $;\label{lipwn}
\item $|w_n''| \leq K_{n+1}$ on $Y_{n+1}$ almost everywhere;
\label{wn''bd}
\end{list}
and for $n \geq 1$:
\begin{list}{(\thecount)}{\usecounter{count}}\setcounter{count}{4}
\item $w_n = w_{n-1}$ off $Y_n$;\label{wn=wn-1}
\item $\|w_n - w_{n-1}\|_{\infty} < 10R_n$; \label{cvg}
\item $w_n (x_i) = w_{n-1} (x_i)$ for all $0 \leq i \leq n$; \label{wnfixxi}
\item $|w_n' (t) - w_{n-1}'(t)| < \frac{T_n^2}{128}$ for $t \notin Z_n \cup \{x_i\}_{i=0}^n $; \label{cvg'} and
\item $|w_n''(t)| < |w_{n-1}''(t)| + 2^{-n}$ for almost every $ t\notin  [x_n - \tau_n, x_n + \tau_n]$\label{wn''-wn-1''}.
    \end{list}
\end{lemma}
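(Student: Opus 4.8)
The plan is to construct the $w_n$ by induction on $n$, at each stage modifying $w_{n-1}$ only on $Y_n$ and there grafting onto it a rescaled translate of $\tw$ near $x_n$. For the base case $n=0$, since $x_0 = 0$ we have $\tw_0 = \tw$ and I would simply take $w_0 = \tw$, so that (1) holds with $\alpha_0 = 1$, $\beta_0 = 0$ and any $\tau_0 \in (0,R_0]$; then (2) is \eqref{alpha}, and (4) follows from \eqref{Kn} (with $n=1$), since \eqref{Ynfarfromxi} places every point of $Y_1$ at distance more than $\sigma_1$ from $x_0$. For (3), \eqref{lamda} exhibits $\tw'(t)$ as $\sin \three 1/|t|$ plus $\cos \three 1/|t|$ times a coefficient of modulus at most $e^{-1}$ by \eqref{one} and \eqref{i}, so the Cauchy--Schwarz inequality gives $|\tw'(t)| \le \sqrt{1 + e^{-2}} < 1 + e^{-1} = 2 - \epsilon_0$. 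Conditions (5)--(9) are vacuous when $n = 0$.

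For the inductive step, suppose $w_{n-1}$ is given. By \eqref{Ynfarfromxi} the interval $Y_n$ avoids $\{x_i\}_{i=0}^{n-1}$, so by (2) and (4) $w_{n-1}$ is $C^1$ on $Y_n$ with $|w_{n-1}''| \le K_n$ there a.e.; write $c = w_{n-1}'(x_n)$, so $|c| < 2 - \epsilon_{n-1} = 2 - 2\epsilon_n$ by (3). I would set $\beta_n := w_{n-1}(x_n)$, which forces (7) since $\tw_n(x_n) = 0$, and $\alpha_n := \max\{1,|c|\} + \delta_n$ for a sufficiently small $\delta_n > 0$; then $\alpha_n \in [1,2)$, and for $\delta_n$ small enough $\alpha_n(1 + \epsilon_n/2) < 2 - \epsilon_n$, so that (using \ref{r1} to bound on $Z_n$ the $\cos$-coefficient in \eqref{lamda} by $\epsilon_n/2$) the graft will satisfy (3). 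Since $\tw_n'(x_n + s)$ ranges over all of $[-1,1]$ for arbitrarily small $s > 0$ — attaining $\pm 1$ exactly, where the $\cos$ term of \eqref{lamda} vanishes — and $|c|/\alpha_n < 1$, I can choose $\tau_n \le R_n$, and as small as I please, with $\tw_n'(x_n + \tau_n) = c/\alpha_n$; by evenness of $\tw'$ the graft $\alpha_n \tw_n + \beta_n$ then has slope exactly $c$ at both points $x_n \pm \tau_n$, so it agrees with $w_{n-1}$ there in slope up to $O(K_n\tau_n)$ and, using \ref{r1} once more, in value up to $O(K_n\tau_n^2 + \tau_n\epsilon_n)$.

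I then set $w_n = \alpha_n \tw_n + \beta_n$ on $[x_n - \tau_n, x_n + \tau_n]$, giving (1), and on $Y_n \setminus [x_n - \tau_n, x_n + \tau_n]$ set $w_n = w_{n-1} + h$ for a $C^{1,1}$ correction $h$ that vanishes with its derivative at $x_n \pm T_n$ (so $w_n = w_{n-1}$ off $Y_n$, giving (5), while (2) follows from \eqref{alpha}) and which absorbs at $x_n \pm \tau_n$ the small slope and value discrepancies just described. Taking $\tau_n$ small enough — relative to $R_n/K_n$, and to $T_n$ via \ref{r3} — one can arrange $\|h\|_\infty < 10R_n$ (using $\tau_n \le R_n$ and $K_nT_n < 1$ from \ref{t5}), $\|h'\|_\infty < T_n^2/128$ outside $Z_n$, and $\|h''\|_\infty < 2^{-n}$ outside $[x_n - \tau_n, x_n + \tau_n]$; then (6), (8), (9) follow, the last using (4) for $w_{n-1}$. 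Condition (3) holds on the graft by the choice of $\alpha_n$ and elsewhere since $h$ has negligible derivative. Finally, also insisting $\tau_n < \sigma_{n+1}$, the interval $Y_{n+1}$ meets neither $x_n$ nor the graft, so on $Y_{n+1}$ the second derivative of $w_n$ is dominated by the contributions $\alpha_i|\tw_i''| \le 2|\tw_i''|$ from the earlier grafts (all evaluated at distance $> \sigma_{n+1}$ from the $x_i$) plus negligible transition terms, whence $|w_n''| \le K_{n+1}$ a.e. on $Y_{n+1}$ by \eqref{Kn} and \eqref{Kn2}.

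The main obstacle is making all nine conditions hold simultaneously in the inductive step. The tension is between forcing $w_n'$ to oscillate near $x_n$ with amplitude $\alpha_n \ge 1$ — which is what will eventually produce the Dini-derivative gap of Theorem~\ref{main} — the very tight Lipschitz bound (3), which lets the bound grow only by $\epsilon_n$ from one stage to the next, and condition (9), which forbids $w_n''$ from exceeding $w_{n-1}''$ by more than $2^{-n}$ off the graft and so rules out any rapid recovery of $w_{n-1}$'s slope. What makes it work is that $\tw'$ is even, so the graft produces \emph{no net change} of slope across its core $[x_n - \tau_n, x_n + \tau_n]$; only residual discrepancies of order $K_n\tau_n$ survive, and since $\tau_n \le R_n$ may be taken absurdly small compared with $T_n$ — precisely the role of the conditions \ref{r1}--\ref{r3} and \ref{t5} — these can be absorbed gradually without violating (8) or (9). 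Most of the remaining labour is the bookkeeping that makes all the ``sufficiently small'' choices (of $\tau_n$ here, of $T_n$ and $R_n$ earlier) mutually consistent — exactly the kind of technicality the authors flag as secondary.
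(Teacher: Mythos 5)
Your plan is essentially sound, and it reaches the lemma by the same overall architecture as the paper (inductively grafting $\alpha_n\tw_n+\beta_n$ onto $w_{n-1}$ near $x_n$, with exact slope matching at $x_n\pm\tau_n$ exploiting evenness of $\tw'$, and a small correction supported in $Y_n$), but the implementation of the join is genuinely different. The paper fixes $\tau_n$ by an intermediate value argument on $[x_n-R_n,x_n)$ with no control on its size, chooses $\alpha_n=m/\rho$ from the extrema of $\tw_n'$ there, inserts \emph{affine} segments of slope $m=w_{n-1}'(x_n)$ on $(x_n-R_n,x_n-\tau_n)$ and $(x_n+\tau_n,x_n+R_n)$, and pushes the value/slope mismatch out to $x_n\pm R_n$, where explicit $C^1$ corrections $\chi_{l},\chi_r$ with piecewise affine derivatives absorb it over the long intervals of length $\sim T_n$; the smallness needed for conditions (6), (8), (9) then comes from the prearranged hierarchy $R_n\ll T_n$ in~\ref{r3}. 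You instead observe that $\tw'$ attains $\pm1$ exactly at arbitrarily small arguments, so $\tau_n$ becomes a free small parameter once $\alpha_n=\max\{1,|c|\}+\delta_n>|c|$; the junction discrepancies are then $O(K_n\tau_n)$ in slope and $O(K_n\tau_n^2+\tau_n\epsilon_n)$ in value, and a single $C^{1,1}$ correction on $Y_n\setminus[x_n-\tau_n,x_n+\tau_n]$ handles them, with all the bounds in (3), (6), (8), (9) obtained by shrinking $\tau_n$ rather than by $R_n\ll T_n$. This buys a cleaner treatment of $\alpha_n$ (no case split on $|m|\le 1$ versus $|m|>1$, and $\alpha_n$ genuinely lies in $[1,2)$, whereas the paper's $m/\rho$ is negative when $m<-1$), at the cost of introducing one more ``sufficiently small'' choice; your verifications of (1)--(3) and (5)--(9), including the use of~\ref{r1} for the bound $\alpha_n(1+\epsilon_n/2)<2-\epsilon_n$ and for the value discrepancy, are correct.

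The one place your sketch is too quick is condition (4). On $Y_{n+1}$ you cannot argue simply that $|w_n''|$ is ``a graft contribution plus negligible transition terms, hence $\le K_{n+1}$ by \eqref{Kn} and \eqref{Kn2}'': a point $t\in Y_{n+1}$ may lie inside an earlier graft $[x_k-\tau_k,x_k+\tau_k]$ with several later corrections stacked on top of it (this happens when some $x_i$, $i>k$, is extremely close to $x_k$), and then the crude bound is $2|\tw_k''(t)|+\sum_{i>k}2^{-i}$, which \eqref{Kn} alone does not dominate; moreover, on $Y_{n+1}\setminus Y_n$ the inductive bound $|w_{n-1}''|\le K_n$ is not directly available, since hypothesis (4) at stage $n-1$ controls $w_{n-1}''$ only on $Y_n$. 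The repair is exactly the paper's bookkeeping, and you have all the ingredients for it: take the largest $k\le n$ with $t\in Y_k$, so $w_n=w_k$ near $t$; if $t$ lies in the stage-$k$ graft, then $|w_n''(t)|=|\alpha_k\tw_k''(t)|<2|\tw_k''(t)|\le K_{n+1}$ by \eqref{Kn} (using that $t\in Y_{n+1}$ is at distance $>\sigma_{n+1}$ from $x_0,\dots,x_n$), while otherwise $|w_n''(t)|\le|w_{k-1}''(t)|+2^{-k}\le K_k+1\le K_{n+1}$ by condition (9) at stage $k$, the inductive hypothesis (4) at stage $k-1$ (applicable because $t\in Y_k$), and \eqref{Kn2}. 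With this case analysis in place of your one-line appeal, your construction satisfies all nine conditions. (Your extra requirement $\tau_n<\sigma_{n+1}$ is harmless but not needed once the argument is organized this way.)
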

\begin{proof}
We easily check that defining $w_0 = \tw_0$ satisfies all the required conditions.  Condition~\eqref{wn=twn} is trivial for $\tau_0 = T_0$, $\alpha_0 = 1$, and $\beta_0 = 0$; and~\eqref{wn'lip} follows from~\eqref{alpha}.  Condition~\eqref{lipwn} follows from~\eqref{lamda},~\eqref{one}, and~\eqref{i} since for $t \neq x_0$ we have
$$
|w_0'(t)|  \leq 1 + \frac{1}{(\two 1/|t|)( \one1/|t|)}  \leq1 + \frac{1}{\one 1/|t|}  \leq 1 + e^{-1}
  =  2 - \epsilon_0.
  $$
  Condition~\eqref{wn''bd} follows from \eqref{Ynfarfromxi} and~\eqref{Kn}.

 Suppose for $n \geq 1$ we have constructed $w_i$ as claimed for all $0 \leq i <n$.  We demonstrate how to insert a certain scaled copy of $\tw_n$ into $w_{n-1}$.

Condition \ref{t2} implies that $x_i \notin Y_n$ for all $0 \leq i <n$, thus $w_{n-1}'$ exists and is Lipschitz on $Y_n$ by inductive hypothesis~\eqref{wn'lip}.  Define $m: = w_{n-1}'(x_n)$, so \linebreak$|m| < 2 - \epsilon_{n-1}$ by inductive hypothesis~\eqref{lipwn}.  (We introduce in this proof a number of variables, e.g. $m$,  which only appear in this inductive step.  Although they do of course depend on $n$, we do not index them as such, since they are only used while $n$ is fixed.) On some yet smaller subinterval $[x_n - \tau_n, x_n + \tau_n]$ of $Z_n$ we aim to replace $w_{n-1}$ with a copy of $\tw_n$, connecting this with $w_{n-1}$ off $Y_n$ without increasing too much either the first or second derivatives, hence the choice of $R_n$ as very much smaller than $T_n$.  Moreover we want to preserve a continuous first derivative.  Hence we displace $w_{n-1}$ by a $C^1$ function---dealing with either side of $x_n$ separately---so that on either side we approach $x_n$ on an affine function of gradient $m$ (a different function either side, in general), which we then connect up with $\tw_n$ at a point where $\tw_n'=m$.  Because we need careful control over the first and second derivatives, it is easiest to construct explicitly the cut-off function we in effect use.

A slight first problem is that so small might be the interval on which we consider $\tw_n$, the derivative might never be large enough in magnitude to perform the join described above.  Hence the possible need to scale $\tw_n$ up slightly by some number $\alpha_n \in (1,2)$ to ensure we can find points where the derivatives can agree.

If $|m| \leq 1$, then by continuity of $\tw_n'$ it is trivial that there exists $\tau_n \in (0, R_n]$ such that $\tw_n' (x_n - \tau_n) = m =  \tw_n'(x_n + \tau_n)$.  So no scaling is required, set $\alpha_n =1$.

If $|m| > 1$, in general we have to scale $\tw_n$ up slightly.  Let $A = \sup_{[x_n - R_n, x_n)} \tw_n'$, and $B = \inf_{[x_n - R_n, x_n)} \tw_n'$.  Then by~\eqref{lamda} and~\ref{r1}
$$
1 < A  \leq  1+ \frac{1}{\two 1/R_n \one 1/ R_n}  < 1 + \epsilon_n / 2
$$
and similarly $ -(1 + \epsilon_n / 2) < B < -1$.  Let $\rho = \min \{ |A|, |B|\}$, so $ 1 < \rho < 1 + \epsilon_n /2$.  These values are attained, say $\tw_n' (y) = A$ and $\tw_n'(z) = B$ for $y, z \in [x_n - R_n , x_n)$.  Thus we have $\tw_n'(y) = |\tw_n' (y)| \geq \rho$ and $-\tw_n' (z) = |\tw_n'(z)| \geq \rho$.  Put $\alpha_n = m / \rho$, so $|\alpha_n| < 2$.  Evidently the function $|\alpha_n \tw_n'|$ takes its maximum value over $[x_n - R_n, x_n)$ at $y$ or $z$, and so calculating
$$
|\alpha_n \tw_n'(y)|  <  \frac{|m|(1 + \epsilon_n / 2)}{ \rho}
 <  |m| (1 + \epsilon_n / 2)
<  |m| + \epsilon_n
 <  2 - \epsilon_{n-1} + \epsilon_n
 = 2 - \epsilon_n,
 $$
 and similarly for $|\alpha_n  \tw_n' (z)|$, we see $|\alpha_n \tw_n'| < 2 - \epsilon_n$ on $[x_n - R_n, x_n)$, and since this is an even function we have \beq |\alpha_n \tw_n'(t)| < 2 - \epsilon_n \ \textrm{for all $t \in Z_n \backslash \{x_n\}.$}\label{eta}\eeq We now show we have indeed scaled $\tw_n$ large enough, despite ensuring this bound.  If $m \geq 0$ we see that
 $$
 \alpha_n\tw_n' (y) = \frac{m \tw_n'(y)}{\rho} \geq m, \ \textrm{and}\ \alpha_n \tw_n' (z)= \frac{m\tw_n'(z)}{\rho} \leq -m \leq m,
 $$
 and if $m \leq 0$ we see that
 $$
 \alpha_n \tw_n' (y) = \frac{m \tw_n'(y)}{\rho} \leq m, \ \textrm{and}\ \alpha_n \tw_n'(z) = \frac{m\tw_n'(z)}{\rho} \geq -m \geq m.
 $$
 So in either case, since by~\eqref{alpha} $\tw_n'$ is continuous on $[x_n - R_n, x_n)$,  we can apply the intermediate value theorem to find $\tau_n \in (0, R_n]$ with $\alpha_n \tw_n' (x_n - \tau_n) = m$.  Thus also of course $\alpha_n \tw_n' (x_n + \tau_n) = m$.

We now construct the cut-off functions $\chi_l$ and $\chi_r$ we use on the left and right of $x_n$ respectively.  Additional constants and functions used in the construction are labelled similarly.

Let $\delta_l = m-w_{n-1}'(x_n - R_n)$.  So recalling that $w_{n-1}'$ is Lipschitz on $Y_n \supseteq Z_n$, we see by inductive hypothesis~\eqref{wn''bd} and~\ref{r3} that
\begin{align}
|\delta_l| = |w_{n-1}'(x_n) - w_{n-1}'(x_n - R_n)| \leq \|w_{n-1}''\|_{L^{\infty}(Z_n)} R_n &\leq K_n R_n \label{moddeltaa}\\
&< \epsilon_n.\label{moddeltab}
\end{align}
Define
$$
c_l = w_{n-1}(x_n) + \alpha_n \tw_n (x_n - \tau_n) - m (x_n - \tau_n) - w_{n-1}(x_n - R_n) + m (x_n -R_n).
$$
The point is that the function $t \mapsto mt + w_{n-1}(x_n - R_n) - m(x_n - R_n) + c_l$ is an affine function with gradient $m$ which takes value $w_{n-1}(x_n - R_n) + c_l$ at $(x_n - R_n)$ and value $m (x_n - \tau_n) + w_{n-1}(x_n - R_n) - m (x_n - R_n) + c_l = w_{n-1}(x_n) + \alpha_n \tw_n (x_n - \tau_n)$ at $(x_n - \tau_n)$.

Note that by inductive hypothesis~\eqref{lipwn},
\begin{align}
|c_l| & \leq  |\alpha_n \tw_n (x_n - \tau_n)| + |w_{n-1}(x_n) - w_{n-1}(x_n - R_n)| + |m| |(x_n - R_n) - (x_n - \tau_n)| \nonumber\\
& < |\alpha_n |\tau_n + 2R_n + 2R_n \nonumber\\
& < 6R_n. \label{gamma}
\end{align}
Now put $d_l = \frac{4}{T_n}(c_l - \frac{\delta_l}{2}(T_n/2  - R_n).$  Define piecewise affine $g_l \colon [-T,T] \to \nolinebreak\R$ by stipulating
$$
g_l(x_n - T_n) = 0 = g_l (x_n - T_n/2), \ g_l(x_n - 3T_n/4) = d_l,
$$
and
$$
g_l(t) =
\begin{cases}
0 & t \leq x_n - T_n \\
\delta_l & t \geq x_n - R_n\\
\textrm{affine} & \textrm{otherwise.}
\end{cases}
$$
So by definition of $d_l$,
\beq
\int_{-T}^{x_n - R_n} g_l(t)\,dt = \int_{x_n - T_n}^{x_n - R_n} g_l(t)\,dt = \frac{1}{2}\left(\frac{T_n d_l}{2} + (T_n/2 - R_n)\delta_l\right) = c_l.\label{intgl}
\eeq
Now, $\|g_l\|_{\infty} = \max\{ |\delta_l|, |d_l|\}$. We see by~\eqref{gamma},~\eqref{moddeltaa}, and~\ref{r3} that
\begin{align}
|d_l| \leq \frac{4}{T_n}\left(|c_l| + \frac{|\delta_l|}{2}(T_n/2-R_n)\right)& < \frac{4}{T_n}\left(6R_n + \frac{T_n K_n R_n}{4}\right)\nonumber \\
 &= \frac{24R_n}{T_n} + K_nR_n \label{iia}\\
&< \epsilon_n.\nonumber
\end{align}
So, comparing with~\eqref{moddeltaa} and~\eqref{moddeltab}, we have
\begin{align}
\|g_l\|_{\infty} &\leq \frac{24R_n}{T_n} + K_nR_n \label{epsilonlb}\\
&< \epsilon_n.\label{epsilonla}
\end{align}
Also, $g_l'$ exists almost everywhere and satisfies $\|g_l'\|_{\infty} = \max\{\frac{4 |d_l| }{T_n}, \frac{|\delta_l|}{T_n/2 - R_n}\}$, and so since, from~\eqref{iia} and~\ref{r3}
$$
\frac{4|d_l|}{T_n} < \frac{4}{T_n} \left(\frac{24R_n}{T_n} + K_n R_n\right) = \frac{96R_n}{T_n^2} + \frac{4K_n R_n}{T_n} < 2^{-n},
$$
and since~\ref{r3} in particular implies $R_n < T_n/4$, using~\eqref{moddeltaa} and~\ref{r3} we see $$
\frac{|\delta_l|}{(T_n / 2) - R_n} < \frac{4 R_n K_n}{T_n} < 2^{-n},
$$
we have
\beq
\|g_l'\|_{\infty} < 2^{-n}\label{zetal}.
\eeq

We can now define $\chi_l \colon [-T,T] \to \R$ by $\chi_l (t) = \int_{-T}^t g_l(s)\,ds$.  This gives  \linebreak$\chi_l \in C^1[-T,T]$ such that $\chi_l' = g_l$ everywhere, $\chi_l'' = g_l'$ almost everywhere, and, by~\eqref{intgl},
$$
\chi_l(x_n - T_n) = 0, \ \chi_l (x_n - R_n) = c_l, \ \chi_l'(x_n - R_n) = g_l(x_n - R_n) = \delta_l.
$$

We perform a very similar argument on the right of $x_n$, to construct piecewise affine function $g_r \colon [-T,T] \to \R$.  Define
$$
c_r = w_{n-1}(x_n) + \alpha_n \tw_n (x_n + \tau_n)   - m (x_n + \tau_n) - w_{n-1}(x_n + R_n) + m (x_n + R_n),
$$
and $\delta_r = m - w_{n-1}'(x_n + R_n)$,  and finally $d_r = \frac{4}{T_n}(c_r + \frac{\delta_r}{2}(T_n / 2 - R_n))$. Then again stipulate
$$
g_r (x_n + T_n/2) = 0 = g_r(x_n + T_n),\ g_r (x_n +3T_n/4) = -d_r,
$$
and elsewhere
$$
g_r(t)=
\begin{cases}
\delta_r & t\leq x_n + R_n \\ 0 & t \geq x_n + T_n \\ \textrm{affine} & \textrm{otherwise}.
\end{cases}
$$
So by definition of $d_r$, we have
\begin{equation}
\int_{x_n + R_n}^{x_n + T_n}g_r (t)\,dt= \frac{1}{2}\left(\delta_r (T_n/2 - R_n) - \frac{d_r T_n}{2}\right) =  -c_r.\label{gr}
\end{equation}
All the numbers $c_r, \delta_r, d_r$ satisfy the same bounds as their left-hand counterparts, and thus $g_r$ satisfies the same bounds as $g_l$ above, i.e.
\begin{align}
\|g_r\|_{\infty}  &\leq \frac{24R_n}{T_n} + K_n R_n \label{epsilonrb}\\
 &< \epsilon_n \label{epsilonra}
 \end{align}
 and
 \begin{equation}
 \|g_r'\|_{\infty}  <  2^{-n}\label{zetar}.
 \end{equation}
 We now define $\chi_r \colon [-T,T] \to \R$ by
 $$
 \chi_r (t)= c_r - \delta_r((x_n + R_n) - (-T))+ \int_{-T}^t g_r(s)\,ds,
 $$
 which gives $\chi_r \in C^1[-T,T]$ such that $\chi_r' = g_r $ everywhere, $\chi_r'' = g_r'$ almost everywhere, and, by~\eqref{gr},
 $$
 \chi_r (x_n + R_n) = c_r, \ \chi_r (x_n + T_n) = 0, \ \chi_r'(x_n + R_n) = g_r (x_n + R_n) = \delta_r.
$$

We can now define $w_n \colon [-T,T] \to \R$ by
$$
w_n(t) =
\begin{cases}
w_{n-1}(t) + \chi_l (t) & t \leq x_n - R_n \\
mt + w_{n-1}(x_n - R_n) - m(x_n - R_n) + c_l & x_n - R_n < t <x_n - \tau_n \\
\alpha_n \tw_n (t) + w_{n-1}(x_n) & x_n - \tau_n \leq t \leq x_n + \tau_n \\
mt + w_{n-1}(x_n + R_n) - m (x_n + R_n) + c_r & x_n + \tau_n < t < x_n + R_n \\
w_{n-1}(t) + \chi_r (t) & x_n + R_n \leq t .
\end{cases}
$$
We see $w_n$ is continuous by construction.  Condition~\eqref{wn=twn} is immediate, with $\alpha_n$ and $\tau_n$ as defined, and $\beta_n = w_{n-1}(x_n)$.  We note that since $\chi_l(t) = 0$ for $t < x_n - T_n$, $\chi_r(t) = 0$ for $t > x_n + T_n$, we have that $w_n = w_{n-1}$ off $Y_n$, as required for \eqref{wn=wn-1}.

We see that $w_n'$ exists off $\{x_i\}_{i=0}^n$ by inductive hypothesis~\eqref{wn'lip},~\eqref{alpha}, and by construction,  and is given by
$$
w_n'(t) =
\begin{cases}
w_{n-1}'(t) + g_l (t) & t \leq x_n - R_n \\
m & x_n -R_n < t < x_n - \tau_n \\
\alpha_n \tw_n'(t)& x_n - \tau_n \leq t < x_n, \ x_n < t \leq x_n + \tau_n \\
m & x_n + \tau_n < t < x_n + R_n \\
w_{n-1}'(t) + g_r (t) & x_n + R_n \leq t.
\end{cases}
$$
This is locally Lipschitz on $[-T,T] \backslash \bigcup_{i=0}^n\{x_i\}$ by inductive hypothesis~\eqref{wn'lip} on $w_{n-1}'$,~\eqref{alpha}, and since $g_l$ and $g_r$ are Lipschitz.  By inductive hypothesis~\eqref{lipwn}, and conditions~\eqref{epsilonla},~\eqref{epsilonra}, and~\eqref{eta}, we have for $ t \notin \{x_i\}_{i=0}^n$,
$$
|w_n'(t)| \leq
\begin{cases}
|w_{n-1}'(t)| + |g_l(t)| < 2 - \epsilon_n & t \leq x_n - R_n\\
|m| < 2 - \epsilon_n & x_n - R_n < t < x_n - \tau_n \\
|\alpha_n \tw_n'(t)| < 2 - \epsilon_n & x_n - \tau_n \leq t < x_n, \ x_n < t \leq x_n + \tau_n \\
|m| < 2 - \epsilon_n & x_n + \tau_n < t < x_n + R_n \\
|w_{n-1}'(t)| + |g_r(t)| < 2- \epsilon_n & x_n + R_n \leq t.
\end{cases}
$$
Hence~\eqref{lipwn}.  We also see by~\eqref{epsilonlb} and~\ref{r3} that for $ t \leq x_n - R_n$, $t \notin \{x_i\}_{i=0}^{n-1}$,
$$
|w_n' (t) - w_{n-1}'(t)| = |g_l(t)| \leq \frac{24R_n}{T_n} + K_n R_n < \frac{T_n^2}{128};
$$
and similarly for $ t \geq x_n + R_n$, $ t \notin \{x_i\}_{i=0}^{n-1}$, by~\eqref{epsilonrb} and~\ref{r3}  we have that
$$
|w_n'(t) - w_{n-1}'(t)| = |g_r(t)| \leq \frac{24R_n}{T_n} + K_n R_n < \frac{T_n^2}{128};
$$
hence \eqref{cvg'}.
Also $w_n''$ exists almost everywhere and where it does, is given by

$$
w_n''(t) =
\begin{cases}
w_{n-1}''(t) + g_l'(t) & t < x_n - R_n \\
0 & x_n - R_n < t < x_n - \tau_n \\
\alpha_n \tw_n''(t) & x_n - \tau_n < t < x_n, \ x_n < t <x_n + \tau_n \\
0 & x_n + \tau_n < t < x_n + R_n\\
w_{n-1}''(t) + g_r'(t) & x_n + R_n < t
\end{cases}
$$
and thus by~\eqref{zetal}, for $ t < x_n - R_n $ we have
$$
|w_n''(t)| \leq |w_{n-1}''(t)| + |g_l'(t)| < |w_{n-1}''(t)| + 2^{-n},
$$
and by~\eqref{zetar}, for $x_n + R_n < t$, we have
$$
|w_n''(t)|  \leq |w_{n-1}''(t)| + |g_r'(t)| < |w_{n-1}''(t)| + 2^{-n}.
$$
 Hence~\eqref{wn''-wn-1''}.  We now check \eqref{wn''bd}.  Let $ t \in Y_{n+1}$.  Then by~\eqref{Ynfarfromxi} we see that
$$
2\sum_{i=0}^n |\tw_i''(t)| \leq K_{n+1}
$$
precisely by choice of $K_{n+1}$ in~\eqref{Kn}.  Let $0 \leq k \leq n$ be such that $t \in Y_k \backslash \bigcup_{i=k+1}^n Y_i$.    Then by inductive hypothesis~\eqref{wn=wn-1} for $k+1, \dots, n$ (we have checked this for $k=n$), we have that $w_n = w_k$ on a neighbourhood of $t$, so $w_n''(t) = w_k ''(t)$ where both sides exist, i.e. almost everywhere.  If $ t \notin [x_k - \tau_k , x_k + \tau_k]$, then by inductive hypotheses~\eqref{wn''-wn-1''} (we have checked this for $k=n$) and~\eqref{wn''bd}, and by~\eqref{Kn2}, we have almost everywhere,
$$
|w_n''(t)| = |w_k''(t) | \leq |w_{k-1}''(t)| + 2^{-k} \leq K_{k} + 1  \leq K_{n+1}
$$
as required.  If $ t \in (x_k - \tau_k , x_k + \tau_k)$, then by inductive hypothesis~\eqref{wn=twn} (we have checked this for $k=n$), almost everywhere we have, as noted above,
$$
|w_n''(t)| = |w_k''(t)| = |\alpha_k \tw_k ''(t)| < 2 |\tw_k''(t)| \leq 2\sum_{i=0}^k |\tw_i ''(t)| \leq 2\sum_{i=0}^n |\tw_i''(t)| \leq  K_{n+1}
$$
as required.

Now observe that on $[-T, x_n - R_n]$, we have, by definition, and using~\eqref{iia},~\eqref{moddeltaa}, and~\ref{t5}, that
\begin{align*}
|\chi_l| & \leq  \frac{1}{2} \left(\frac{T_n}{2} |d_l| + (T_n/2-R_n)|\delta_l|\right) \\
&\leq  \frac{T_n}{4}\left(\frac{24R_n}{T_n} + K_nR_n + K_n R_n \right)\\
& \leq R_n\left(6 + \frac{ T_n K_n }{2}\right)\\
& < 7R_n.
\end{align*}
A similar estimate holds for $\chi_r$ on $[ x_n + R_n, T]$: we note first by~\eqref{gr} that 
\begin{align*}
\chi_r(t) & =  c_r - \delta_r((x_n + R_n) + T) + \int_{-T}^t g_r(s)\, ds \\
& =  c_r + \int_{x_n + R_n}^t g_r(s)\,ds \\
& =  - \int_{x_n + R_n}^{T} g_r(s)\,ds + \int_{x_n + R_n}^t g_r(s)\,ds \\
& =  - \int_t^{ T} g_r(s)\,ds
\end{align*}
and then, since $|\chi_r| \leq \int_{x_n + R_n}^{T}|g_r|$ on $[x_n + R_n, T]$,  we can estimate as above.
So, for  $x_n - T_n \leq t \leq x_n - R_n$, we have
$$
|w_n (t) - w_{n-1}(t)| = |\chi_l(t)| \leq 7R_n
$$
and similarly for $x_n + R_n \leq t \leq x_n + T_n$ we have
$$
|w_n (t) - w_{n-1}(t)| = |\chi_r(t)| \leq 7R_n.
$$
By inductive hypothesis~\eqref{lipwn} and~\eqref{gamma}, we have for $x_n - R_n < t < x_n - \tau_n $ that
$$
|w_n (t) - w_{n-1}(t)| \leq |mt - m(x_n - R_n)| + |w_{n-1}(x_n - R_n) - w_{n-1}(t)| + |c_l| < 10R_n
$$
and similarly for $x_n + \tau_n < t < x_n + R_n$ we have
$$
|w_n (t) - w_{n-1}(t)|\leq |mt - m(x_n +R_n)| + |w_{n-1}(x_n +R_n) - w_{n-1}(t)| + |c_r| <10R_n.
$$
Finally for $x_n - \tau_n \leq t \leq x_n + \tau_n $, by inductive hypothesis~\eqref{lipwn} again we have
$$
|w_n (t) - w_{n-1}(t)| \leq |\alpha_n \tw_n (t)| + |w_{n-1}(x_n) - w_{n-1}(t)| \leq 2|\tau_n| + 2|\tau_n| \leq 4R_n .
$$
Hence we have, using also~\eqref{wn=wn-1} (which we have checked for $n$),
$$
\|w_n - w_{n-1}\|_{\infty} = \sup_{t \in Y_n}|w_n (t) - w_{n-1}(t)| < 10R_n
$$
as required for~\eqref{cvg}.

We finally check~\eqref{wnfixxi}.  Let $ 0 \leq i \leq n$.  If $ i < n$, then $x_i \notin Y_n$ by \ref{t2}, so 
$w_n (x_i) = w_{n-1}(x_i) $ by~\eqref{wn=wn-1}.  Since $\tw_n (x_n) = 0$, we see  from the construction that  $w_n (x_n) = w_{n-1}(x_n)$  as required for the full result.
\end{proof}
We now show easily that this sequence converges to a Lipschitz function $w$.  This $w$ will be our singular minimizer.

\begin{lemma}\label{wncvg}
The sequence $\{w_n\}_{n=0}^{\infty}$ converges uniformly to some $w \in \mathrm{AC}[-T,T]$ such that
\begin{list}{(\thecount)}{\usecounter{count}}
\item  $\mathrm{Lip}(w) \leq 2$; \label{lipw}
\item for all $n \geq 0$, $w(x_i) = w_n (x_i)$ for all $0 \leq i \leq n+1$; \label{wfixxi}
\item for all $n\geq 0$, $w' = w_n'$ almost everywhere off $\bigcup_{i=n+1}^{\infty}Y_i$; \label{wn'tow'}and
\item $ \|w- w_n\|_{\infty} \leq 20R_{n+1}$ for all $ n\geq 0$.\label{w-wn}
\end{list}
\end{lemma}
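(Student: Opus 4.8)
The plan is to read off all four assertions from Lemma~\ref{wnlemma} together with the geometric decay $R_k < R_{k-1}/2$ supplied by~\ref{r2}; the only genuinely non-bookkeeping point will be the last one, property~\eqref{wn'tow'}.

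First I would establish uniform convergence and~\eqref{w-wn} simultaneously. By~\eqref{cvg} and~\ref{r2}, for $k > n$ we have $\|w_k - w_{k-1}\|_\infty < 10 R_k < 10\cdot 2^{-(k-n-1)} R_{n+1}$, so $\sum_{k>n}\|w_k - w_{k-1}\|_\infty < 20 R_{n+1}$; hence $\{w_n\}$ is uniformly Cauchy, converges uniformly to a continuous $w$, and the same tail estimate gives $\|w - w_n\|_\infty \le 20 R_{n+1}$. For~\eqref{lipw}, each $w_n \in \mathrm{AC}[-T,T]$ satisfies $|w_n'| < 2$ almost everywhere by~\eqref{lipwn} (the finitely many points $x_0,\dots,x_n$ forming a null set), so $|w_n(t) - w_n(s)| = |\int_s^t w_n'| \le 2|t-s|$, i.e. $\mathrm{Lip}(w_n) \le 2$; a uniform limit of $2$-Lipschitz functions is $2$-Lipschitz, so $\mathrm{Lip}(w) \le 2$ and in particular $w \in \mathrm{AC}[-T,T]$.

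For~\eqref{wfixxi} I would fix $n$ and $0 \le i \le n+1$ and iterate~\eqref{wnfixxi}: for every $m \ge n+1$ one has $i \le m$, so $w_m(x_i) = w_{m-1}(x_i)$, whence $w_m(x_i) = w_n(x_i)$ for all $m \ge n$, and letting $m \to \infty$ gives $w(x_i) = w_n(x_i)$. For~\eqref{wn'tow'}, the key preliminary observation is that if $t \notin \bigcup_{i=n+1}^\infty Y_i$, then applying~\eqref{wn=wn-1} successively for the indices $n+1, n+2, \dots$ (at each stage $t \notin Y_m$) yields $w_m(t) = w_n(t)$ for all $m \ge n$, so in the limit $w = w_n$ on the measurable set $E := [-T,T]\setminus\bigcup_{i=n+1}^\infty Y_i$. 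It then remains to upgrade this equality of functions on $E$ to equality of their derivatives almost everywhere on $E$; for this I would apply to $h := w - w_n \in \mathrm{AC}[-T,T]$ the standard fact that an absolutely continuous function vanishing on a measurable set has zero derivative at almost every point of that set (at almost every $x \in E$ the function $h$ is differentiable and $E$ has density $1$ at $x$, so there are $x_k \in E\setminus\{x\}$ with $x_k \to x$, forcing $h'(x) = \lim_k (h(x_k) - h(x))/(x_k - x) = 0$).

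I expect this last point — passing from $w \equiv w_n$ on $E$ to $w' = w_n'$ a.e. on $E$ — to be the only step beyond routine bookkeeping, and hence the place to be careful. An alternative would be to use~\eqref{cvg'} and~\ref{r3} to see that $\{w_n'\}$ is a uniform Cauchy sequence off $\bigcup_i Z_i$ and to identify its limit with $w'$ via dominated convergence (all $|w_n'| \le 2$), but the density-point argument is shorter and self-contained.
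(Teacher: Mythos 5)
Your proposal is correct, and for items~\eqref{lipw}, \eqref{wfixxi} and~\eqref{w-wn} it is essentially the paper's own argument: the same geometric-series estimate from~\eqref{cvg} and~\ref{r2} gives the uniform Cauchy property and $\|w-w_n\|_\infty \le 20R_{n+1}$, the Lipschitz bound comes from~\eqref{lipwn}, and~\eqref{wfixxi} is obtained by iterating~\eqref{wnfixxi}. Where you genuinely diverge is item~\eqref{wn'tow'}. The paper argues pointwise: for $t \notin \{x_i\}_{i=0}^n \cup \bigcup_{i>n} Y_i$ and $j>n$, the finite union $\bigcup_{i=n+1}^{j} Y_i$ is closed, so $w_j = w_n$ on a neighbourhood of $t$ and hence $w_j'(t) = w_n'(t)$; it then notes that $\mathrm{meas}\bigl(\bigcap_n \bigcup_{i \ge n} Y_i\bigr) = 0$, so $w_j'$ converges pointwise almost everywhere on $[-T,T]$, and identifies that limit with $w'$ by dominated convergence applied to $\int_{-T}^{t} w_j'$. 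You instead use only the pointwise identity $w = w_n$ on $E := [-T,T]\setminus\bigcup_{i>n} Y_i$ (obtained by iterating~\eqref{wn=wn-1} and passing to the limit) and then apply to $h = w - w_n$ the standard Lebesgue-density fact that an absolutely continuous function vanishing on a measurable set has zero derivative at almost every point of that set. Both routes are valid; yours is shorter and avoids the DCT identification of the a.e.\ limit of $w_n'$ with $w'$, while the paper's argument yields as a by-product the pointwise a.e.\ convergence $w_n' \to w'$ on all of $[-T,T]$ --- but since only the statement~\eqref{wn'tow'} as written is invoked later (e.g.\ in Lemma~\ref{finitejump}), your version suffices for everything that follows. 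The only point worth spelling out in your density argument is that $h' = w' - w_n'$ wherever both derivatives exist, which is almost everywhere because both $w$ and $w_n$ are absolutely continuous; with that remark the passage from $h'=0$ a.e.\ on $E$ to $w' = w_n'$ a.e.\ on $E$ is complete.
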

\begin{proof}
Let $n\geq 0$.  We use~\eqref{cvg} and~\ref{r2}  to see that
for $m > n$ we have
$$
\|w_m - w_n\|_{\infty} < 10 (R_m + \dots + R_{n+1}) \leq 10 (2^{-(m - (n+1))} + \dots + 1)R_{n+1} < 20R_{n+1}.
$$
Hence the sequence $\{w_n\}_{n=0}^{\infty}$ is uniformly Cauchy, and therefore converges uniformly to some $w \in C[-T,T]$.  Condition~\eqref{w-wn} follows immediately, and~\eqref{lipw} follows from~\eqref{lipwn}, and so of course certainly $w \in \mathrm{AC}[-T,T]$.  Condition~\eqref{wfixxi} follows directly from~\eqref{wnfixxi}.

We check~\eqref{wn'tow'}.  Fix $n \geq 0$, let $ t \in [-T,T] \backslash (\{x_i\}_{i=0}^n \cup \bigcup_{i=n+1}^{\infty}Y_i)$, and let  $j > n$.   In particular then $t \notin \bigcup_{i=n+1}^{j} Y_i$ which is a closed set, thus by~\eqref{wn=wn-1} there is a neighbourhood of $t$ on which $w_j = w_n$. Therefore $w_j'(t) = w_n'(t)$, which exists by~\eqref{wn'lip}.  So $\lim_{j \to \infty} w_j'(t)$ exists and equals $w_n'(t)$.

For each $t \in [-T,T] \backslash (\{x_i\}_{i=0}^{\infty} \cup \bigcap_{n=0}^{\infty} \bigcup_{i= n}^{\infty} Y_i )$, this argument runs for some $n \geq 0$. Since for all $n \geq 0$, by~\ref{t3},
 $$
 \mathrm{meas}\left(\bigcap_{n=1}^{\infty} \bigcup_{i=n}^{\infty} Y_i\right) \leq \mathrm{meas}\left(\bigcup_{i=n}^{\infty}Y_i\right) \leq \sum_{i=n}^{\infty}2T_i \leq 4T_n,
 $$
 and~\ref{t3} guarantees $T_n \to 0$ as $ n \to \infty$, we see that $w_n'$ has a pointwise limit almost everywhere.  We can easily see this limit must be equal to $w'$: for $t \in [-T,T]$, we recall from~\eqref{lipwn} that $|w_n'| \leq 2$ for all $n\geq 0$ and use the dominated convergence theorem to see
 $$
 \int_{-T}^t \lim_{n \to \infty} w_n'(s)\,ds  =  \lim_{n \to \infty} \int_{-T}^t w_n'(s)\, ds  =  w(t) - w(-T)
 $$
 and hence $w' = \lim_{n \to \infty} w_n'$ almost everywhere.  Since almost everywhere off \linebreak$\bigcup_{i=n+1}^{\infty}Y_i$ we have $\lim_{i \to \infty} w_i' = w_n'$ as shown above, we have the result claimed.
 \end{proof}
Our basic weight function $\tp \colon [-T,T] \times \R \to [0, \infty)$ will be given by
$$
\tp(t,y) =
\begin{cases}
0 & t = 0 \\
5 \psi(t)|t| & |y| \geq 5 |t| \\
\psi(t)|y| & |y| \leq 5 |t|.
\end{cases}
$$
We need some bound of form $|\phi(t,y)| \leq c |t| \psi(t)$ to ensure continuity of $\phi$; it turns out (see Lemma~\ref{lemma1}) that sensitive tracking of $|y|$ only for $|y| \leq 5|t|$ suffices in the proof of minimality.  Our function $\tw$ was constructed precisely so that~\eqref{wn''cont} and hence~\ref{psi2} hold, and hence that this $\tp$ is continuous.

We in fact will find it useful to split $\tp$ into the summands by which we defined~$\psi$.  Precisely, we define for each $n \geq 0$ our translated weight functions  \linebreak$\tp_n^1, \tp_n^2 \colon [-T,T] \times \R \to  [0, \infty)$ as follows.  For $n \geq 0$, and for $i = 1,2$,  we recall that we only need extra weight on $Y_n$, so define for $(t, y) \in Y_n \times \R$
$$
\tp_n^i (t,y) =
\begin{cases}
0 & t= x_n \\
5 \psi_n^i (t)|t-x_n | & |y| \geq 5|t-x_n| \\
\psi_n^i(t) |y| & |y| \leq 5|t-x_n|
\end{cases}
$$
and then just extend to a function on the whole of $[-T,T] \times \R$ by defining for $(t,y) \in ([-T,T] \backslash Y_n) \times \R$
$$
\tp_n^i(t,y) =
\begin{cases}
5\psi_n^i(x_n + T_n) T_n & |y| \geq 5T_n \\\psi_n^i(x_n +T_n) |y|& |y| \leq 5T_n.
\end{cases}
$$
For $n\geq 0$ we thus define  $\tp_n \colon [-T,T] \times \R \to [0, \infty)$ by  $\tp_n (t,y) = \tp_n^1 (t,y) + \tp_n^2 (t,y)$.  By \ref{psi2} we see that $\tp_n \in C([-T,T] \times \R)$
.

It is easily seen that for fixed $t \in [-T,T]$, for all $ n\geq 0$, we have
\begin{gather*}
\tp_n (t,y) \leq \tp_n(t, z)\ \textrm{whenever}\ |y| \leq |z|;\label{tphiinc}\\
\mathrm{Lip}(\tp_n(t,.)) \leq \max\{\psi_n (t), \psi_n (x_n + T_n)\};\label{liptphi}\ \textrm{and}\\
\tp_n (t, 0) = 0.\label{tp0is0}
\end{gather*}
 Defining $\phi_n \colon [-T,T] \times \R \to [0, \infty)$ by $\phi_n(t,y) = \sum_{i=0}^n \tp_i (t,y)$ gives a sequence of functions $\phi_n \in C([-T,T] \times \R)$ such that for each fixed $t \in [-T,T]$, for all $n \geq0$,
 \begin{gather}
 \phi_n (t, y) \leq \phi_n (t, z) \ \textrm{whenever $ |y| \leq |z|;$}\label{phininc}\\
 \mathrm{Lip}(\phi_n(t, .)) \leq \sum_{i=0}^n \left(\max\{\psi_i (t), \psi_i (x_i + T_i)\}\right);\label{lipphin}\ \textrm{and}\\
 \phi_n (t, 0 ) = 0.\label{phin0is0}
 \end{gather}
 For $n \geq 1$, by \ref{t4}, we see that for all $(t, y) \in [-T,T] \times \R$
 $$
 0 \leq \tp_n (t,y) \leq \sup_{t \in Y_n} 5 \psi_n(t)|t-x_n| < 2^{-n}.
 $$
 So defining  $\phi(t,y) = \sum_{i=0}^{\infty} \tp_i (t,y)$ gives  $\phi \in C([-T,T] \times \R)$ with, by~\eqref{C}, \beq
 \| \phi\|_{\infty} \leq \| \tp_0\|_{\infty} + \sum_{i=1}^{\infty}\|\tp_i\|_{\infty} \leq \|\tp_0\|_{\infty} + \sum_{i=1}^{\infty}2^{-i} = \|\tp_0\|_{\infty} + 1 = C,\label{phibd}
 \eeq
 and
 \beq
 \|\phi- \phi_n\|_{\infty} \leq \sum_{i=n+1}^{\infty} \|\tp_i\|_{\infty}< \sum_{i = n+1}^{\infty}2^{-i} = 2^{-n}.\label{phi-phin}
 \eeq
 By passing to the limit in the relations~\eqref{phininc} and~\eqref{phin0is0} we see that for fixed \linebreak$ t \in [-T,T]$,
\begin{gather}
 \phi(t, y) \leq \phi(t, z) \ \textrm{whenever $ |y | \leq |z|$}; \label{phiinc}\ \textrm{and}\\  \phi(t, 0) = 0. \label{phi0is0}
 \end{gather}
 We shall write $\phi = \phi^1 + \phi^2$ where $\phi^i = \sum_{j=0}^{\infty} \tp_j^i$ for $i=1,2$.

We can now define a continuous Lagrangian $L\colon [-T,T] \times \R \times \R \to [0, \infty)$, superlinear and strictly convex in $p$, by setting
$$
L(t, y, p) = p^2 + \phi(t, y - w(t)).
$$
Note in fact that $L$ is differentiable with respect to $p$ and $L_{pp}(t,y,p) = 2 > 0$ for all $(t, y, p) \in [-T, T] \times \R \times \R$, thus it does satisfy the stronger strict convexity assumption required by Tonelli.

Associated with this is the usual variational problem given by defining functional $\mathscr{L}\colon \mathrm{AC}[-T,T] \to [0, \infty)$ by
$$
\mathscr{L}(u) = \int_{-T}^{T} L(t, u(t), u'(t))\,dt
$$
and seeking to minimize $\mathscr{L}(u)$ over those functions $u \in \mathrm{AC}[-T,T]$ with boundary conditions $u(\pm T) = w(\pm T)$.  We shall refer to this set-up as ($\star$).

\section{Minimality}
 We shall find the following approximations of our functional $\mathscr{L}$ useful: for $ n\geq 0$ define $L_n \colon [-T,T] \times \R \times \R \to [0, \infty)$ by
 $$
 L_n(t, y, p) = p^2 + \phi(t, y - w_n (t)),
 $$
 and  define the corresponding functional $\mathscr{L}_n \colon \mathrm{AC}[-T,T] \to [0, \infty)$ by
 $$
 \mathscr{L}_n(u) = \int_{-T}^{T} L_n(t, u(t), u'(t))\,dt.
 $$
 Working with these approximations is much easier, since there is only a finite number of singularities in $w_n$.  So it is important to know what error we make in moving to these approximations, which is shown in the next lemma.
\begin{lemma}\label{finitejump}
Let $u \in \mathrm{AC}[-T,T]$ and  $n\geq 0$.  Then
$$
|(\scrl(u) - \scrl(w)) - (\scrl_n (u) - \scrl_n (w_n))| <\frac{ T_{n+1}^2}{2}.
$$
\end{lemma}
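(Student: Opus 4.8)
The plan is to rewrite the difference so that the $p^2=(u')^2$ contributions cancel inside the integrands, and then estimate the two remaining integrals term by term against the smallness built into the construction. Regrouping as $\big(\scrl(u)-\scrl_n(u)\big)-\big(\scrl(w)-\scrl_n(w_n)\big)$ (which avoids any $\infty-\infty$ ambiguity, as the $p^2$ term cancels pointwise in each integrand), and using $\phi(t,0)=0$ from~\eqref{phi0is0}, one obtains
$$
(\scrl(u)-\scrl(w))-(\scrl_n(u)-\scrl_n(w_n))=\int_{-T}^{T}\big((w_n')^2-(w')^2\big)\,dt+\int_{-T}^{T}\big(\phi(t,u-w)-\phi(t,u-w_n)\big)\,dt.
$$
The key structural observation is that both integrands vanish off $\Omega:=\bigcup_{i=n+1}^{\infty}Y_i$: the first because $w'=w_n'$ almost everywhere there by~\eqref{wn'tow'}, the second because iterating~\eqref{wn=wn-1} gives $w=w_n$ there. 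Since $\mathrm{meas}(\Omega)\le\sum_{i>n}2T_i<4T_{n+1}$ by~\ref{t3}, everything reduces to estimating the two integrands on $\Omega$.

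For the derivative term I would split $\Omega$ into $Z:=\bigcup_{i>n}Z_i$ and $\Omega\setminus Z$, with $\mathrm{meas}(Z)<4R_{n+1}$ by~\ref{r2}. On $\Omega\setminus Z$, writing $w'-w_n'=\sum_{i>n}(w_i'-w_{i-1}')$ (a finite sum almost everywhere) and applying~\eqref{cvg'} term by term together with~\ref{t3} gives $|w'-w_n'|<\sum_{i>n}T_i^2/128<T_{n+1}^2/96$; combined with $|w'+w_n'|\le 4$ from~\eqref{lipw} and~\eqref{lipwn}, and $\mathrm{meas}(\Omega)<4T_{n+1}$, this part is $O(T_{n+1}^3)$. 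On $Z$ one has only the trivial bound $|(w_n')^2-(w')^2|\le 16$, but $R_{n+1}$ is minuscule by~\ref{r3}, so this part is again $O(T_{n+1}^3)$.

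For the $\phi$ term the obstacle is that $\phi(t,\cdot)$ carries no uniform Lipschitz bound, so I would split off the set $G_n$. On $\Omega\cap G_n$ use the crude bound $|\phi(t,u-w)-\phi(t,u-w_n)|\le 2\|\phi\|_{\infty}\le 2C$ from~\eqref{phibd}; since $\mathrm{meas}(G_n)\le T_{n+1}^2/(16C)$ by~\eqref{Fnmeas}, this contributes at most $T_{n+1}^2/8$, which is the dominant term. On $\Omega\setminus G_n$, split $\phi=\phi_{m_n}+(\phi-\phi_{m_n})$: the tail is controlled by $\|\phi-\phi_{m_n}\|_{\infty}\le 2^{-m_n}<T_{n+1}^2/256$ from~\eqref{phi-phin} and~\eqref{mn}, contributing $O(T_{n+1}^3)$ over $\Omega$; while for the head $\mathrm{Lip}(\phi_{m_n}(t,\cdot))\le M_n$ for $t\notin G_n$ by~\eqref{lipphin} and~\eqref{Mn}, so the integrand is at most $M_n\|w-w_n\|_{\infty}\le 20M_nR_{n+1}$ by~\eqref{w-wn}, which~\ref{r3} makes $O(T_{n+1}^4)$ once integrated over $\Omega$.

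Collecting everything, the bound is $T_{n+1}^2/8$ plus terms of order $T_{n+1}^3$ and smaller, and since $T_{n+1}<2^{-(n+1)}\le\tfrac12$ by~\eqref{iv} these absorb comfortably into $T_{n+1}^2/2$. I expect the only genuine difficulty to be the bookkeeping: one must correctly isolate the two ``bad'' regions --- $Z=\bigcup_{i>n}Z_i$, where the derivative of $w$ oscillates, and $G_n$, where $\phi(t,\cdot)$ is badly non-Lipschitz --- on which only trivial estimates of the integrands are available, and verify that conditions~\ref{r2},~\ref{r3},~\eqref{mn},~\eqref{Fnmeas},~\eqref{Mn} were arranged precisely so that these regions have measure quadratically small in $T_{n+1}$, while off them $w$ and $w_n$ (and their derivatives) are close enough for the Lipschitz-in-$y$ estimate on $\phi$ to prevail.
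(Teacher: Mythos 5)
Your proposal is correct and takes essentially the same route as the paper: the identical regrouping into the $\phi$-difference for $u$ (controlled via $G_n$, the truncation $\phi_{m_n}$, the Lipschitz bound $M_n$ and~\ref{r3}) and the derivative-difference for $w$ versus $w_n$ (controlled via~\eqref{cvg'}, the smallness of the $Z_i$ and of $\bigcup_{i>n}Y_i$), with the same dominant contribution $T_{n+1}^2/8$ coming from $G_n$. The only departures are bookkeeping ones---you localize both integrands to $\bigcup_{i>n}Y_i$ and telescope $w'-w_n'$ over all $i>n$ rather than comparing with $w_{m_n}'$ and treating $\bigcup_{i>m_n}Y_i$ separately---which if anything give a slightly sharper constant than the paper's $T_{n+1}^2/4+T_{n+1}^2/4$.
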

\begin{proof}
We first estimate  $|\scrl (u) - \scrl_n(u)|$.
Recall our definitions of $m_n>n$, $M_n \geq 0$,  and $G_n \supseteq \bigcup_{i=0}^{m_n}\{x_i\}$ from above.  Let $t \in [-T,T] \backslash G_n$. We see by~\eqref{lipphin} and~\eqref{Mn} that
$$
\mathrm{Lip}(\phi_{m_n}(t, .))\leq \sum_{i=0}^{m_n} \left(\max\{\psi_i (t), \psi_i (x_i + T_i)\}\right) \leq M_n.
$$
Then using~\eqref{w-wn} and~\ref{r3} that
$$
|\phi_{m_n}(t, u-w) - \phi_{m_n}(t, u-w_n)|  \leq M_n \|w-w_n\|_{\infty}  \leq 20 M_n R_{n+1} \leq  \frac{T_{n+1}^2}{16}.
 $$
Then by~\eqref{phi-phin} and~\eqref{mn}, for all $t \in [-T,T] \backslash G_n$ we have
\begin{align*}
|\phi(t, u - w) - \phi(t, u- w_n)| &\leq |\phi(t, u-w) - \phi_{m_n} (t, u-w)| \\&\phantom{\leq} {}+ |\phi_{m_n}(t, u-w) - \phi_{m_n}(t, u-w_n)| \\&\phantom{\leq} {}+ |\phi_{m_n}(t, u-w_n) - \phi(t , u-w_n)|\\
 &\leq 2 \|\phi - \phi_{m_n}\|_{\infty} + \frac{T_{n+1}^2}{16} \\
 &< 2 \cdot 2^{-m_n} + \frac{T_{n+1}^2}{16}\\
 &<  \frac{T_{n+1}^2}{8}.
 \end{align*} So
 $$
 \int_{[-T,T] \backslash G_n} |\phi(t, u - w) - \phi(t, u- w_n)| \leq \frac{T_{n+1}^2}{8}.
 $$
Now,  using \eqref{phibd} and \eqref{Fnmeas}, we see
$$
\int_{G_n} |\phi(t, u - w) - \phi(t, u - w_n)|  \leq  2 \int_{G_n} \|\phi\|_{\infty}
 \leq  2C\, \mathrm{meas}(G_n)
 \leq  \frac{T_{n+1}^2}{8}.
 $$
Combining, we have
\begin{equation}
|\scrl(u) - \scrl_n (u)|  \leq \int_{-T}^{T} |\phi(t, u-w) - \phi(t, u-w_n)| \leq  \frac{T_{n+1}^2}{4} \label{finitejump1}.
\end{equation}

Now we estimate $|\scrl(w) - \scrl_n (w_n)|$.  For a.e. $t \in (\bigcup_{i=n+1}^{m_n}Y_i) \backslash (\bigcup_{i=n+1}^{m_n}Z_i)$, we have by~\eqref{cvg'} and~\ref{t3} that
$$
|w_n'(t) - w_{m_n}'(t)| \leq \left( \sum_{i= n+1}^{m_n}|w_i'(t) - w_{i-1}'(t)|\right) \leq \sum_{i=n+1}^{m_n} \frac{T_i^2}{128} \leq \frac{T_{n+1}^2}{64}.
$$
By~\eqref{lipwn},~\ref{r2}, and~\ref{r3}, we have
$$
\int_{\bigcup_{i= n+1}^{m_n}Z_i}|w_n' - w_{m_n}'| \leq 4\, \mathrm{meas}\left( \bigcup_{i=n+1}^{m_n}Z_i \right) \leq 4 \left(\sum_{i=n+1}^{m_n} 2 R_i\right) \leq 16 R_{n+1} \leq \frac{T_{n+1}^2}{64}.
$$
Thus, using~\eqref{wn'tow'},
\begin{align*}
\int_{(\bigcup_{i=n+1}^{\infty}Y_i)\backslash (\bigcup_{i=m_n + 1}^{\infty}Y_i)}|w_n' - w'| & = \int_{(\bigcup_{i=n+1}^{\infty}Y_i)\backslash (\bigcup_{i=m_n + 1}^{\infty}Y_i)}|w_n' - w_{m_n}'|\\
& \leq   \int_{\bigcup_{i=n+1}^{m_n}Y_i} |w_n' - w_{m_n}'| \\
&\leq  \frac{T_{n+1}^2}{32}.
\end{align*}
On the other hand, by~\eqref{lipwn},~\eqref{lipw},~\eqref{iv}, and~\eqref{mn},
\begin{align*}
\int_{\bigcup_{i=m_n + 1}^{\infty}Y_i} |w_n' - w'|  & \leq  4\, \mathrm{meas}\left( \bigcup_{i=m_n + 1}^{\infty} Y_i \right)  \\& \leq   4 \left(\sum_{i= m_n + 1}^{\infty} 2 T_i \right)\\& <   8 \left(\sum_{i= m_n + 1}^{\infty}  2^{-i}\right)\\& =   8\cdot 2^{-m_n} \\ & <  \frac{T_{n+1}^2}{32}.
\end{align*}
Hence by~\eqref{phi0is0},~\eqref{wn'tow'},~\eqref{lipwn}, and~\eqref{lipw},
\begin{equation}
|\scrl (w) - \scrl_n (w_n)|  \leq \int_{-T}^{T} | (w')^2 - (w_n')^2| \leq 4 \int_{\bigcup_{i=n+1}^{\infty}Y_i}|w_n' - w'|
 < \frac{T_{n+1}^2}{4} \label{finitejump2}.
\end{equation}
Combining the two estimates~\eqref{finitejump1} and~\eqref{finitejump2} gives the result.
\end{proof}
 We now show that $w$ is the unique minimizer of ($\star$).   We briefly discuss the main ideas behind the proof, which as mentioned before, are essentially those of the proof that $\tw$ minimizes the variational problem with ``basic'' Lagrangian
 $$
 (t, y, p) \mapsto \tL(t,y,p) = \tp(t, y-\tw(t)) + p^2.
   $$
   So suppose for now $\tu \in \mathrm{AC}[-T,T]$ is a minimizer for this basic problem with  Lagrangian $\tL$.  If $\tu(0)=\tw(0)$, it suffices to argue separately on $[-T,0]$ and $[0, T]$.  We consider $[0,T]$.  But $\tw$ is  $C^{\infty}$ on $(0, T)$, so we can make the important step of integrating by parts.  Moreover, a simple trick relying on $\tu$ being a minimizer gives us that $|\tu(t)| \leq |t|$ (see Lemma~\ref{lemma1} below for the essence of the argument), so $|\tu(t) - \tw(t)| \leq 2 |t|$.  Note that for any two functions $\bar{u}, \bar{w} \in \mathrm{AC}[-T,T]$, we have
 \begin{equation}
 (\bar{u}')^2 - (\bar{w}')^2 = (\bar{u}' - \bar{w}')^2 + 2 (\bar{u}' - \bar{w}')\bar{w}' \geq 2 (\bar{u}' - \bar{w}')\bar{w}'. \label{triv}
 \end{equation}
 So we can  argue
\begin{align*}
\int_0^T \big(\tp(t, \tu-\tw) + (\tu')^2 \big)- \int_0^T (\tw')^2 &\geq \int_0^T \big(2(\tu'-\tw')\tw' + \tp(t, \tu -\tw)\big)\\
 & =[2(\tu-\tw)\tw']_0^T  \\ &\phantom{=} -{}\int_0^T \big(2(\tu - \tw)\tw'' + \tp(t, \tu - \tw)\big) \\
& \geq \int_0^T \big(\psi(t) |\tu - \tw| - 2 |\tu - \tw|| \tw''(t)|\big)
\end{align*}
and hence it suffices to choose $\psi$ large enough to dominate $\tw''$, which we can do (this is the role of $\psi^2$).  
This argument cannot be performed in the case when $\tu(0) \neq \tw(0)$, and there is no \textit{a priori} reason why this might not occur.  In this case, we compare $\tu$ not with $\tw$ but with a new function we obtain by replacing $\tw$ with a linear function on an interval around $0$
.

This basic idea on $\tw$ is mimicked locally on $w$ around each $x_n$; more precisely we in fact argue with $w_n$ and then either show that for some $n$ this suffices to give the result for $w$, or pass to the limit.  The techniques of our proof show in fact that $w_n$ is the unique minimizer of the variational problem
$$
\mathrm{AC}[-T,T] \ni u \mapsto \mathscr{L}_n(u)
$$
over those $u$ such that $u(\pm T) = w_n (\pm T) (= w(\pm T))$.  Thus in particular we get an example of a one-point non-differentiable minimizer: the conditions of Lemma 3.6 below always hold for $n=0$, which  already shows that Tonelli's theorem cannot hold in the continuous case.

We return to the problem proper.  Suppose now $u \in \mathrm{AC}[-T,T]$ is a minimizer for ($\star$) and $u \neq w$.  Note that a minimizer certainly exists, since $L$ is continuous, and superlinear and convex in $p$.  We now make a number of estimates, with the eventual aim of showing that
$$
\mathscr{L}(u) - \mathscr{L}(w) = \int_{-T}^{T} \big((u')^2 + \phi(t, u-w) - (w')^2\big) > 0,
$$
which contradicts the choice of $u$ as a minimizer for ($\star$).  Write $v= u-w$, and $v_n = u - w_n$.  If $u(x_n) = w(x_n)$ for all $ n\geq 0$, then as discussed above the proof is an easy application of integration by parts on the complement of the closure of the points $\{x_n\}_{n= 0}^{\infty}$.  (In the case that $\{x_n\}_{n=0}^{\infty}$ forms a dense set in $[-T, T]$, we should immediately have $u=w$ by continuity, thus concluding the proof of minimality of $w$ without using either the assumption that $u$ was a minimizer or that $u \neq w$.)   Should $w(x_n) \neq u(x_n)$ for some $ n \geq 0$, further argument is required.  The next lemma shows us that \emph{since $u$ is a minimizer}, it cannot be too badly behaved around any point $x \in [-T,T]$ where $u(x) \neq w(x)$.
\begin{lemma}\label{lemma1}
Let $x \in [-T,T]$ be such that $u(x) \neq w(x)$.
Let $J \subseteq [-T,T]$ be the connected component of the set of points $t \in [-T,T]$ such that
\bq
| u(t) - w(x)| > 3|t-x| \ \textrm{for $ t \in J$}.\label{epsilon}
\eq
Note that $J$ is an open subinterval of $[-T,T]$ since $u$ and $w$ agree at $\pm T$ and so by~\eqref{lipw}
$$
\left|u(\pm T) - w(x)\right| = \left|w(\pm T) - w(x)\right| \leq 2 \left|\pm T - x\right|.
$$
So there exist $a, b >0$ be such that $J= (x- a, x+ b)$ and
\bq
|u(x-a) - w(x)| = 3a \ \textrm{and}\ |u(x+b) - w(x)| = 3b .\label{delta}
\eq
Then
 \begin{list}{(\thecount)}{\usecounter{count}}
 \item $|u'| \leq 2$ almost everywhere on $J$; and \label{u'leq2onJ}
\item$|u(t) - w(x) | \leq 3 |t-x|$ for $t \notin J$.\label{smalloffjx}
\end{list}
\end{lemma}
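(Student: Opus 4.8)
The plan is to exploit the minimality of $u$ via one-sided variations: on the interval $J$ (where $u$ strays far from the constant value $w(x)$) we will show that replacing $u$ by a ``clipped'' competitor strictly decreases $\mathscr L$ unless $|u'|\le 2$ a.e.\ on $J$, and the definition of $J$ as the maximal component gives (\ref{smalloffjx}) immediately. First I would record the basic geometric facts already stated: $J=(x-a,x+b)$ is a genuine open subinterval with $u(x)\ne w(x)$ forcing $x\in J$, and at the endpoints $|u(x-a)-w(x)|=3a$, $|u(x+b)-w(x)|=3b$ by continuity and maximality. For (\ref{smalloffjx}), note that any $t\notin J$ lies outside \emph{the} connected component of $x$, so either the strict inequality $|u(t)-w(x)|>3|t-x|$ fails at $t$ (giving (\ref{smalloffjx}) directly), or $t$ lies in a different component; but then the component containing $x$ would have to be separated from $t$ by a point where equality holds, and a short connectedness argument (the set where $|u(t)-w(x)|>3|t-x|$ is open, $u$ continuous) shows $x$ and $t$ cannot be in different components unless equality is achieved in between — in fact the cleanest route is: the function $t\mapsto |u(t)-w(x)|-3|t-x|$ is continuous, negative or zero at $\pm T$, positive at $x$; so $\{>0\}$ is open and its component containing $x$ is exactly $J$, whence on $[-T,T]\setminus J$ we have $|u(t)-w(x)|\le 3|t-x|$. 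That disposes of (\ref{smalloffjx}).

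The substantive claim is (\ref{u'leq2onJ}). Here I would argue by contradiction and use that $u$ minimizes $\mathscr L$. Suppose $|u'|>2$ on a positive-measure subset of $J$. Consider the competitor $\tilde u$ obtained by ``truncating'' $u$ towards the level $w(x)$ on $J$: concretely, replace $u$ on $J$ by $\tilde u(t) = w(x) + \mathrm{med}\{-3|t-x|,\, u(t)-w(x),\, 3|t-x|\}$, i.e.\ project the value $u(t)-w(x)$ into the cone $|y|\le 3|t-x|$. This $\tilde u$ is absolutely continuous, agrees with $u$ at $x\pm a, x\pm b$ hence off $J$ (and in particular satisfies the boundary conditions), and on $J$ one has $|\tilde u(t)-w(t)|\le |\tilde u(t)-w(x)| + |w(t)-w(x)| \le 3|t-x| + 2|t-x| = 5|t-x|$, using (\ref{lipw}); so $\tilde u - w$ stays in the ``small'' regime where $\phi$ is controlled. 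The key pointwise inequalities are: wherever $\tilde u$ differs from $u$ we have $|\tilde u - w(x)| = 3|t-x| < |u - w(x)|$, so by monotonicity of $\phi$ in the second slot (\ref{phiinc}) applied after noting $|\tilde u - w| $ and $|u-w|$ compare appropriately — more carefully, one uses $\phi(t,\cdot)\ge 0$ and $\phi(t,0)=0$ together with the explicit form, so the $\phi$-term does not increase — while for the $(u')^2$ term, $\tilde u'$ is either $u'$ or $\pm 3$ (the derivative of $\pm 3|t-x|$), and on the set where $|u'|>2\ge 3$ is false... here I must be careful: $|\tilde u'|\le 3$ is not automatically $\le |u'|$. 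So instead I would truncate into the cone $|y|\le 2|t-x|$ is still not enough either. The right move, matching the constant ``$3$'' and ``$2$'' in the statement, is: on $J$ define $\tilde u$ to be the affine interpolation...

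Let me restate the mechanism cleanly. The correct competitor replaces $u$ on $J$ by the function whose graph is the ``lower/upper envelope'': on each maximal subinterval of $J$ where $u(t)-w(x)$ exceeds $3(t-x)$ in absolute value... Actually the standard trick (this is the ``simple trick relying on $\tilde u$ being a minimizer'' alluded to before Lemma~\ref{lemma1}) is: since $u$ minimizes and $p\mapsto p^2$ is convex, for any competitor agreeing with $u$ at the endpoints of a subinterval, replacing $u$ there by the \emph{affine} function with the same endpoint values does not increase $\int (u')^2$ (Jensen), and changes $\int\phi(t,u-w)$ in a controlled way provided the affine function stays closer to $w$. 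Applying this on $J$: replace $u|_J$ by the affine function $\ell$ through $(x-a,u(x-a))$ and $(x+b,u(x+b))$. Since $|u(x\pm)-w(x)| = 3a, 3b$ and $w$ is $2$-Lipschitz, $\ell$ satisfies $|\ell(t)-w(x)|\le \max(3a,3b)\cdot(\text{convex combination bound}) \le 3(a+b)$, and on $J$, $|\ell(t)-w(t)|\le |\ell(t)-w(x)|+2|t-x|$. The slope of $\ell$ is $(u(x+b)-u(x-a))/(a+b)$, which has absolute value $\le (3a+3b)/(a+b) = 3$; hmm, still $3$ not $2$. So the bound one genuinely gets from this argument is $|u'|\le 3$, or, with the better ``median projection'' argument and using that the projection does not increase $\int(u')^2$ (projection onto a convex set in an $L^2$-sense, applied via the pointwise fact that for a $1$-Lipschitz-in-$t$ barrier the truncation decreases the Dirichlet energy), one gets $|u'|\le 3$ on the truncated part and $|u'|$ unchanged elsewhere — but elsewhere on $J$, by maximality $|u-w(x)|>3|t-x|$ fails to be improvable... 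I think the intended statement, reading $|u'|\le 2$, must come from combining $|u(x\pm)-w(x)| = 3a,3b$ \emph{exactly} with the \emph{definition of $J$ via strict $>3|t-x|$ throughout}, so that $u-w(x)$ has the same sign on each side of $x$ and is monotone-ish; then $|u'|\le 2$ would follow from a more delicate energy comparison. I will therefore carry out the median-truncation competitor, verify it is admissible and stays in the $|{\cdot}|\le 5|t-x|$ regime, show $\mathscr L(\tilde u)\le \mathscr L(u)$ with strict inequality if $\{|u'|>2\}$ has positive measure in $J$ — the strictness coming from strict convexity of $p\mapsto p^2$ — and conclude (\ref{u'leq2onJ}).

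The main obstacle, and where I would spend the most care, is exactly this energy comparison showing the truncated competitor does not increase $\int_J (u')^2$ while the penalty term $\int_J\phi(t,\tilde u-w)\le\int_J\phi(t,u-w)$ — this needs the monotonicity (\ref{phiinc}) together with a verification that truncating $u-w(x)$ into the symmetric cone brings $u-w$ \emph{closer} to $0$ in absolute value at a.e.\ point (true because $|w(t)-w(x)|\le 2|t-x| < 3|t-x|$, so $w(t)$ lies strictly inside the cone, hence projecting $u(t)$ towards the cone also decreases $|u(t)-w(t)|$) — and the standard but slightly fiddly fact that projecting an $\mathrm{AC}$ function onto a moving interval $[-3|t-x|+w(x),\,3|t-x|+w(x)]$ does not increase its $L^2$ norm of the derivative, which one proves by the chain rule / coarea argument or by noting the projection is a composition of $u$ with a $1$-Lipschitz retraction applied levelwise. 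Getting the constants to land on $2$ rather than $3$ in (\ref{u'leq2onJ}) is the delicate point; I expect it uses that on $J$ the strict inequality in (\ref{epsilon}) forces $u-w(x)$ to stay on one side, so that the truncation to $3|t-x|$ actually forces $\tilde u' = \pm 3$ \emph{only} on a null set, and the genuine bound on $u'$ comes from optimizing against an affine comparison respecting the endpoint values — after which $|u'|\le |u(x+b)-w(x)|/b\cdot(\dots)$ reorganizes to $\le 2$ using $w$'s Lipschitz bound once more.
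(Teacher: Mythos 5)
Your proof has genuine gaps in both conclusions. For the second conclusion, you claim it ``disposes of'' itself by connectedness: you argue that the open set $S=\{t:|u(t)-w(x)|>3|t-x|\}$ has $J$ as its component containing $x$, ``whence'' $|u(t)-w(x)|\le 3|t-x|$ off $J$. That inference is false as a purely topological statement: $S$ may perfectly well have other components disjoint from $J$ (nothing prevents $u$ from re-entering the cone and leaving it again further away), and your parenthetical ``short connectedness argument'' does not rule this out. In the paper this conclusion is \emph{not} free; it is proved using minimality: if $u(t_0)>w(x)+3(t_0-x)$ for some $t_0$ to the right of $J$, one replaces $u$ by the affine function $h(s)=w(x)+3(s-x)$ on the component $I$ of $\{u>h\}$ (which is contained in $(x+b,T)$ because $u=h$ at $x+b$ and $u(T)=w(T)<h(T)$ by the Lipschitz bound on $w$); on $I$ one has $w\le w(x)+2|s-x|\le h<u$, so $|h-w|\le|u-w|$ and the $\phi$-term does not increase, while strict H\"older gives $\int_I(h')^2<\int_I(u')^2$, contradicting minimality. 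For the first conclusion, your median-truncation competitor rests on the assertion that projecting an absolutely continuous function onto the moving interval $[w(x)-3|t-x|,\,w(x)+3|t-x|]$ does not increase $\int(u')^2$; that is false for non-constant barriers (if $u$ is locally constant outside the cone, the truncated function acquires slope $\pm3$ there), and even if it held it would only constrain where $u$ lies, not give a pointwise bound on $u'$ --- as you yourself observe, every variant you try lands on the constant $3$, and you end by conjecturing, not proving, how to reach $2$.

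The mechanism you are missing is the one the paper uses: assume $u(x)>w(x)$ and work not on $J$ but on the connected component $(x-c,x+d)$ containing $x$ of the \emph{wider} set $\{t:u(t)>w(x)+2|t-x|\}$ (which contains $J$). First prove $u$ is convex on $(x-c,x+d)$: if not, there is a chord $h$ through two points of the graph with $h<u$ on the open subinterval between them; since the chord lies above the convex function $t\mapsto w(x)+2|t-x|$, which in turn dominates $w$ by the Lipschitz bound, one gets $|h-w|<|u-w|$ there, so by the monotonicity of $\phi$ in its second argument the penalty does not increase, while strict H\"older (as $u$ is not affine on that subinterval) strictly decreases the quadratic term --- contradicting minimality. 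Then the bound $|u'|\le 2$ follows from convexity together with the endpoint identities $u(x-c)=w(x)+2c$ and $u(x+d)=w(x)+2d$: if $u'(t_0)>2$ at some point, convexity forces $u'>2$ on $(t_0,x+d)$ and integrating overshoots the prescribed value $u(x+d)=w(x)+2d$. This is why the constant in the conclusion is $2$ (the Lipschitz constant of $w$) even though $J$ is defined with the constant $3$; the slope-$3$ cone is only needed later, and both parts of the lemma genuinely use that $u$ is a minimizer.
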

 \begin{proof}
We suppose $u(x) > w(x)$.  The argument for the case $u(x) < w(x)$ is very similar.  Let $c, d > 0$ be such that $(x-c, x+d) $ is the connected component containing $x$ such that $u(t) > w(x) + 2|t - x|$ on $(x-c, x+d)$.  So  $u(x-c) =w(x)+ 2c$, and $u(x+d) = w(x) + 2d$.  We shall firstly prove that $u$ is convex on $(x-c, x+d)$.  (In the case $u(x) < w(x)$, we would have that $u$ is concave on $(x-c, x+d)$.)  Suppose not, so there exist $t_1, t_2 \in (x-c, x+d)$,  $t_1 < t_2$ say, and $\lambda \in [0,1]$ such that $$u(\lambda t_1 + (1-\lambda)t_2) > \lambda u(t_1) + (1- \lambda ) u(t_2).$$  Let $h \colon [-T,T] \to \R$ be the affine function with graph passing through $(t_1, u(t_1))$ and $(t_2, u(t_2))$, so
$$
h(t) = \frac{u(t_2) - u(t_1)}{t_2 - t_1}(t-t_1) + u(t_1) .
$$
So we have by assumption on $t_1, t_2$ that
$$
h(\lambda t_1 + (1-\lambda)t_2) = \lambda u(t_1) + (1- \lambda) u(t_2) < u(\lambda t_1 + (1-\lambda)t_2).
$$
Passing to connected components if necessary, we can assume
 that $h(t) < u(t)$ on $(t_1, t_2)$.    That $t_1, t_2 \in (x-c, x+d)$ implies
 $$
 u(t_1) >  w(x) + 2|t_1- x|\ \textrm{and}\ u(t_2) > w(x) + 2|t_2 - x|.
 $$
 Since $t \mapsto  2|t- x|$ is convex, and $t \mapsto h(t)$ is a straight line connecting $(t_1, u(t_1))$ and $(t_2, u(t_2))$, we have that for $ t \in (t_1, t_2)$ that
 $$
 h(t) >  w(x) + 2|t-x|.
 $$
 Now,
 $$
 w(t) \leq w(x) + 2|t-x|
 $$
 for all $t \in [-T,T]$ by~\eqref{lipw}, so we have $w(t) < h(t)$ on $(t_1, t_2)$.  So on $(t_1, t_2)$ we have
 $$
 |u-w| = u-w > h-w = |h-w|
 $$
 and thus, by~\eqref{phiinc},
\beq
\phi(t, u-w) \geq \phi(t, h-w).\label{vii}
\eeq

 Since $u > h$ on $(t_1, t_2)$, where $h$ is affine, but $u=h$ at the endpoints, we know $u$ is not affine on $(t_1, t_2)$, so we have strict inequality in H\"older's inequality, thus
 \begin{align}
 \int_{t_1}^{t_2} (u')^2 & = \frac{1}{t_2 - t_1} \left(\int_{t_1}^{t_2} 1^2\right)\left( \int_{t_1}^{t_2} (u')^2 \right) \nonumber\\
& >  \frac{1}{t_2 - t_1} \left(\int_{t_1}^{t_2} u'\right)^2 \nonumber\\
& =  \frac{(u(t_2) - u(t_1))^2}{t_2 - t_1} \nonumber\\
& = \int_{t_1}^{t_2} (h')^2.\label{holder}
\end{align}
Hence defining $\hat{u}\colon [-T,T] \to \R$ by
$$
\hat{u}(t) =
\begin{cases}
u(t) & t \notin (t_1, t_2) \\h(t) & t \in (t_1, t_2)
\end{cases}
$$
gives a $\hat{u} \in \mathrm{AC}[-T,T]$ satisfying our boundary conditions, and such that, using~\eqref{holder} and~\eqref{vii},
 \begin{align*}
 \scrl(\hat{u}) & =    \int_{[-T,T] \backslash (t_1, t_2)} \big((u')^2 + \phi(t, u-w)\big) + \int_{t_1}^{t_2}\big( (h')^2 + \phi(t, h-w)\big) \\
 & <  \int_{[-T,T] \backslash (t_1, t_2)} \big((u')^2 + \phi(t, u-w)\big) + \int_{t_1}^{t_2}\big( (u')^2 + \phi(t, u-w)\big) \\
  & =  \scrl(u)
 \end{align*}
 which contradicts $u$ being a minimizer.  Hence $u$ is indeed convex on $(x-c, x+d)$.  We now claim therefore that $|u'| \leq 2$ everywhere it exists on  $(x-c, x+d)$.  Suppose there exists $t_0 \in(x-c, x+d)$ such that $u'(t_0) > 2$.  Therefore by convexity $u'(t) > 2 $ almost everywhere on $(t_0, x+d)$.  We then have
 \begin{align*}
 u(x+d) & =   u(t_0) + \int_{t_0}^{x+d} u'(s)\, ds  \\
  & >   u(t_0) + \int_{t_0}^{x+d} 2\, ds  \\
& \geq   w(x) + 2|t_0 - x| + 2|(x+d) - t_0|\\
 &\geq w(x) + 2d,
 \end{align*}
 which contradicts  the choice of $d$, since $u(x+d) = w(x) + 2d$. Similarly one gets a contradiction assuming $u'(t_0) < -2$ for some $t_0 \in (x-c, x+d)$.

Statement~\eqref{smalloffjx} of the lemma is proved using the same trick we  used above to prove convexity of $u$ on $(x - c, x + d)$.  Suppose there is a $t_0 \in (x + b, T)$ such that $u(t_0) > w(x) + 3|t_0 - x|$.  Defining affine $h \colon [-T,T] \to \R$ by
$$
h(s) = 
w(x) + 3(s - x),
$$
we see that $h(t_0 ) < u(t_0)$.  The connected component $I$ of $[-T,T]$ such that \linebreak$h < u$ on $I$ satisfies $I \subseteq (x + b, T)$, since $u(x + b) = w(x) + 3b = h(x + b)$,  and by~\eqref{lipw}, $u(T) = w(T) \leq w(x) + 2 |T - x| < h(T)$.  We have
$$
u(s) > h(s) = w(x) + 3|s-x| \geq w(x) + 2|s-x| \geq w(s)
$$
for $ s \in I$, thus $|u-w| = u-w \geq h-w =|h-w|$.  Hence we can perform the same trick as before, constructing a new function $\hat{u} \in \mathrm{AC}[-T,T]$ by replacing $u$ with $h$ on $I$, such that $\mathscr{L}(\hat{u})  < \mathscr{L}(u)$, which again contradicts choice of $u$ as a minimizer. We can argue similarly if there exists a point $t_0 \in (-T, x-a)$ such that $u(t_0) > w(x) + 3|t_0 - x|$, and also if there exists a point $t_0 \in [-T,T] \backslash J$ with $u(t_0) <  w(x) - 3|t_0 - x|$.
  \end{proof}

Thus we see that if for some $x \in [-T,T]$, $u(x) \neq w(x)$, then $u$ must be Lipschitz on a neighbourhood of $x$, and its graph cannot escape the cone bounded by the graphs of $t \mapsto w(x)\pm 3|t-x|$ off this neighbourhood.  We note that the second conclusion of the Lemma holds by the same argument even in case $u(x) = w(x)$ and thus when the set $J$ introduced is empty.

For the remainder of the proof, we assume that $u(x_n) \neq w(x_n)$ for all $n \geq 0$ .  If not one can just perform the following argument on the connected components of $[-T,T] \backslash \cl{\{x_n : u(x_n) = w(x_n) \}}$.   We make remarks in the proofs of Lemma~\ref{ngood} and Corollary~\ref{cor} at those points where a note of  additional argument is required in the   general case.

For each $n \geq 0$ we now introduce some definitions and notation.  Let $a_n, b_n > \nolinebreak0$ be such that $J_n := (x_n - a_n, x_n + b_n)$ is the connected component of $[-T,T]$ containing $x_n$ such that $|u(t) - w(x_n)| > 3|t-x_n|$ for $ t \in J_n$, as in Lemma~\ref{lemma1}.  So
\bq
|u(x_n-a_n) - w(x_n)| = 3a_n, \ \textrm{and}\ |u(x_n+b_n) - w(x_n)| = 3b_n .
\eq
We let $c_n = \max\{a_n, b_n\}$, and write $\tJ_n = [x_n - c_n, x_n + c_n]$.   We note the following immediate corollary of Lemma~\ref{lemma1}.  Fix $ n \geq 0$.  For $ t \notin J_n$, we have for any $i \geq n$, by~\eqref{wfixxi},~\eqref{smalloffjx},  and~\eqref{lipwn} that
\begin{align}
|v_i(t)| & \leq |u(t) - w(x_n)| + |w(x_n) - w_i(t)|\nonumber\\
 & =  |u(t) - w (x_n)| + |w_i(x_n)- w_i (t)| \nonumber \\
 & < 5 |t - x_n|.\label{vileq5offJn}
 \end{align}

 Easy considerations of the graphs of the two Lipschitz functions give the following lower bounds of $|v_n|$ on $J_n$; the interval $J_n$ was defined precisely to ensure such constant lower bounds, i.e. that the graph of  putative minimizer $u$  cannot get too  close to that of $w$ around $x_n$. Let $i \geq n-1$, then $w_i(x_n) = w(x_n)$, so
\begin{align}
|v_i(t)| &\geq a_n\ \textrm{for $t \in [x_n - a_n, x_n]$};\ \textrm{and}\label{upsilon2a}\\
 |v_i(t)| & \geq b_n \ \textrm{ for $t \in [x_n, x_n + b_n]$}\label{upsilon2b}.
\end{align}
As we see next, this lower bound means we have a certain amount of weight concentrated in our Lagrangian around any $x_n$.  The total weight is of course in general even larger---we took an infinite sum of such non-negative terms---but the important term is the $\tp_n$ term which deals precisely with the oscillations introduced by $w_n$ to get singularity of $w$ at $x_n$.
\begin{lemma}\label{intphibig}
Let $n \geq 0$, and suppose $\tJ_n \subseteq Y_n$.  Then
$$
\int_{\tJ_n} \tp_n^1(t, v_n) \geq \frac{201 c_n}{\two 1/c_n}.
$$
\end{lemma}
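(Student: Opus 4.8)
The plan is to discard almost all of $\tJ_n$ and to keep only the sub-interval of length $c_n$ on which $|v_n|$ enjoys the constant lower bound $c_n$, and there to exploit the precise form of the weight $\psi^1$.

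First I would assume without loss of generality that $c_n = b_n$, so that $\tJ_n = [x_n - c_n, x_n + c_n]$ and $[x_n, x_n + c_n] \subseteq \tJ_n$; the case $c_n = a_n$ is entirely symmetric, using $[x_n - a_n, x_n]$ and~\eqref{upsilon2a} in place of $[x_n, x_n + b_n]$ and~\eqref{upsilon2b}. Since $\tJ_n \subseteq Y_n$ by hypothesis, on all of $[x_n, x_n + c_n]$ the weight is given by $\tp_n^1(t, y) = \psi_n^1(t)\min\{|y|, 5(t - x_n)\}$. Applying~\eqref{upsilon2b} with $i = n$ gives $|v_n(t)| \geq b_n = c_n$ for every $t \in [x_n, x_n + c_n]$, so on the smaller interval $[x_n + c_n/5, x_n + c_n]$, where also $5(t - x_n) \geq c_n$, one has the pointwise bound $\tp_n^1(t, v_n) \geq \psi_n^1(t)\,c_n$. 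Integrating this, substituting $s = t - x_n$, and recalling $\psi_n^1(t) = 402/\big((t - x_n)\log\log(1/5(t - x_n))\big)$, I obtain
\[
\int_{\tJ_n}\tp_n^1(t, v_n) \;\geq\; \int_{x_n + c_n/5}^{x_n + c_n}\psi_n^1(t)\,c_n\,dt \;=\; 402\, c_n \int_{c_n/5}^{c_n}\frac{ds}{s\,\log\log(1/5s)}.
\]

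It then only remains to bound this last integral from below, and here the value $T = e^{-e}/2$ does the work: together with $c_n \leq T_n < 2^{-n} \leq T$ (see~\eqref{iv}) it forces $5s < e^{-1}$ and $1/c_n > e^{e}$ on the whole range of integration, so every iterated logarithm occurring is positive and $s \mapsto \log\log(1/5s)$ is decreasing on $[c_n/5, c_n]$. Bounding it above by its value at the left endpoint, namely $\log\log(1/c_n)$, and integrating $1/s$, I get
\[
\int_{c_n/5}^{c_n}\frac{ds}{s\,\log\log(1/5s)} \;\geq\; \frac{1}{\log\log(1/c_n)}\int_{c_n/5}^{c_n}\frac{ds}{s} \;=\; \frac{\log 5}{\log\log(1/c_n)},
\]
whence $\int_{\tJ_n}\tp_n^1(t, v_n) \geq 402(\log 5)\,c_n/\log\log(1/c_n) > 201\,c_n/\log\log(1/c_n)$, since $5 > e$ forces $402\log 5 > 402 > 201$. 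I do not expect a genuine obstacle here: once one notices that the coefficient $402$ in the definition of $\psi^1$ was evidently chosen precisely so that $402\log 5$ clears $201$ with room to spare, the estimate is essentially forced, and the only point needing care is the positivity of the various $\log\log$ terms, which is exactly what the hypothesis $\tJ_n \subseteq Y_n$ and the value of $T$ guarantee.
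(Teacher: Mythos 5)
Your argument is correct, and it reaches the stated bound by a genuinely different computation than the paper's. Both proofs start from the constant lower bound~\eqref{upsilon2b} and the hypothesis $\tJ_n \subseteq Y_n$ (which is what makes the formula $\tp_n^1(t,y)=\psi_n^1(t)\min\{|y|,5|t-x_n|\}$ available on the relevant subinterval), but the paper works on the \emph{inner} interval $[x_n, x_n+b_n/5]$, where $|v_n|\geq b_n \geq 5|t-x_n|$ so the weight saturates at $5\psi_n^1(t)|t-x_n| = 5\cdot 402/\two(1/5|t-x_n|)$, and then estimates the integral by concavity of $t \mapsto 1/\two(1/5|t-x_n|)$ (half of base times height), which yields exactly $201 c_n/\two(1/c_n)$. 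You instead work on the \emph{outer} interval $[x_n+c_n/5, x_n+c_n]$, where the minimum is at least $c_n$, and evaluate $\int \frac{ds}{s}$ explicitly after bounding the decreasing factor $\two(1/5s)$ by its value at the left endpoint; this gives the larger constant $402\log 5$ and makes transparent why the coefficient $402$ suffices, at the small cost of having to check positivity of the iterated logarithms (automatic in the paper's version, since there the integrand is manifestly nonnegative and the concavity estimate needs no sign discussion). Both routes are sound and of comparable length. One small correction: your justification chain ``$c_n \leq T_n < 2^{-n} \leq T$'' is not right, since $2^{-n} \leq T = e^{-e}/2$ fails for small $n$ (e.g.\ $n=0$); what you actually need is simply $c_n \leq T_n \leq T_0 = T$, which follows from $\tJ_n \subseteq Y_n$ together with the monotonicity of the $T_n$ from~\ref{t3}, and this already gives $5c_n \leq 5e^{-e}/2 < e^{-1}$ and $1/c_n > e^e$, so your positivity and monotonicity claims stand with that citation repaired.
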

\begin{proof}
Suppose $b_n \geq a_n$.  The case $a_n > b_n$ differs only in trivial notation.  So \linebreak$c_n = b_n$, and~\eqref{upsilon2b} implies that on $[x_n, x_n + b_n /5]$  we have $|v_n(t)| \geq 5|t-x_n|
$, so here $\tp_n^1 (t, u- w_n) = 5 |t- x_n| \psi_n^1(t)$ by definition.  Since $t \mapsto \frac{1}{\two 1/5|t-x_n|}$ is a concave function on $[x_n, x_n + b_n / 5]$, we can estimate the integral as follows, and see using the definition of $\psi_n^1$ that
\begin{align*}
\int_{\tJ_n} \tp_n^1 (t, v_n) & \geq  \int_{x_n}^{x_n + b_n / 5} 5|t- x_n| \psi_n^1 (t)\\
& =  \int_{x_n }^{x_n + b_n / 5} \frac{5 \cdot 402}{\two 1/5|t-x_n|} \\
& \geq  \frac{1}{2} \frac{b_n}{5}\bigg(\frac{5 \cdot 402 }{ \two 1/b_n} \bigg)\\
 &=  \frac{201  b_n}{\two 1/b_n}.
\qedhere
\end{align*}
\end{proof}
 For $n\geq 0$ we define $H_n \subseteq [-T,T]$ by
$$
H_n : = \tJ_n \cap [x_n - \tau_n , x_n + \tau_n] = [x_n - d_n , x_n + d_n],\ \textrm{say},
$$
so $d_n \leq c_n$.  Note that
 $$
 w_n(x_n \pm d_n) = \alpha_n \tw_n (x_n \pm d_n) + \beta_n;\ \textrm{and}\ \tw_n'(x_n \pm d_n) = \alpha_n \tw_n'(x_n \pm d_n).
 $$
 We cannot immediately mimic the main principle of the proof and  integrate by parts across $x_n$, since $\tw'_n$ does not exist at $x_n$.  This singularity is of course the whole point of the example.  \label{ltrick}The main trick of the proof was in making the oscillations of $\tw_n$ near $x_n$  slow enough so that we can replace this function with a straight line on an interval containing $x_n$.  We can then use parts either side of this interval, and inside the interval exploit the fact that we have now introduced a function with constant derivative.  We incur an error in the boundary terms, of course, as we in general introduce discontinuities of the derivative where the line meets $\tw_n$, but the function $\tw_n$ oscillates slow enough that this error can be dominated by the weight term in the Lagrangian (the role of $\psi_n^1$).

So let $\tl_n\colon [-T,T] \to \R$ denote the affine function with graph connecting  \linebreak$(x_n - d_n, \tw_n (x_n - d_n))$ and $(x_n + d_n,\tw_n (x_n + d_n))$, i.e.
$$
\tl_n (t) = \tl_n' (t - (x_n - d_n)) + \tw_n (x_n - d_n),
$$
where
\beq
\tl_n' = \frac{\tw_n (x_n + d_n) - \tw_n (x_n - d_n)}{2d_n} = \sin \three 1/d_n.\label{ix}
\eeq
So note by~\eqref{lipwn} that
\beq
|\alpha_n \tl_n' | \leq \mathrm{Lip}(w_n) <  2. \label{modtln'}
\eeq
Define $l_n \colon [-T,T] \to \R$ by
$$
l_n (t) =
\begin{cases}
w_n(t) & t \notin H_n \\
\alpha_n \tl_n (t) + \beta_n & t \in H_n .
\end{cases}
$$
Clearly $l_n \in \mathrm{AC}[-T,T]$.

We shall find the following notation useful, representing the boundary terms we get as a result of integrating by parts, firstly inside $H_n$, integrating $l_n' v_n'$, and secondly outside $H_n$, integrating $w_n' v_n'$:
\begin{align*}
I_{n, l} = l_n'v_n(x_n -d_n),&\ I_{n, r} = l_n' v_n(x_n + d_n);\\E_{n, l} = w_n' (x_n - d_n) v_n (x_n -d_n), &\ E_{n, r} = w_n' (x_n + d_n) v_n (x_n + d_n).
\end{align*}
Note that
\begin{align}
|I_{n,l} - E_{n,l} |&= |\alpha_n||v_n (x_n - d_n) (\tl_n' - \tw_n'(x_n - d_n))|;\label{pil}\ \textrm{and} \\|I_{n,r} - E_{n,r}| &= |\alpha_n||v_n(x_n + d_n) (\tl_n' - \tw_n'(x_n + d_n))|.\label{pir}
\end{align}

\begin{lemma}\label{u'-wn'overHn}
Let $n \geq0$.
Then
$$
\int_{H_n} (u')^2 
 - (w_n')^2 > 2(I_{n,r} - I_{n,l}) - \frac{160d_n}{\two1/d_n}.
 $$
\end{lemma}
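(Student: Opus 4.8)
The plan is to compare $u$ on $H_n$ not with $w_n$ but with the affine function $l_n$, whose derivative $l_n'=\alpha_n\tl_n'$ is constant there, and to estimate separately the error made in passing from $w_n$ to $l_n$. First note that $l_n=w_n$ at the two endpoints $x_n\pm d_n$ of $H_n$ (since $\tl_n$ agrees with $\tw_n$ at $x_n\pm d_n$ by construction, and $w_n=\alpha_n\tw_n+\beta_n$ on $[x_n-\tau_n,x_n+\tau_n]\supseteq H_n$ by~\eqref{wn=twn}, recalling $d_n\le\tau_n$). Using the pointwise identity $(u')^2-(l_n')^2=(u'-l_n')^2+2(u'-l_n')l_n'$, integrating over $H_n$, and pulling the constant $l_n'$ out of the last integral, we obtain $\int_{H_n}(u')^2-(l_n')^2=\int_{H_n}(u'-l_n')^2+2\alpha_n\tl_n'(v_n(x_n+d_n)-v_n(x_n-d_n))\ge 2(I_{n,r}-I_{n,l})$, by the definitions of $I_{n,l}$ and $I_{n,r}$ and after discarding the nonnegative square.

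It remains to bound $\int_{H_n}(l_n')^2-(w_n')^2$ from below by $-\tfrac{160d_n}{\two 1/d_n}$. Since $w_n'=\alpha_n\tw_n'$ and $l_n'=\alpha_n\tl_n'$ almost everywhere on $H_n$ (by~\eqref{wn=twn} and the definition of $l_n$), this quantity equals $-\alpha_n^2 X$, where $X:=\int_{H_n}(\tw_n')^2-(\tl_n')^2$; and since $\alpha_n<2$ by~\eqref{wn=twn} it is enough to show $X<\tfrac{40d_n}{\two 1/d_n}$ (when $X\le 0$ the lower bound is trivial). For this I use the explicit formula~\eqref{lamda} for $\tw_n'$, the value $\tl_n'=\sin\three 1/d_n$ from~\eqref{ix}, and the evenness of $\tw_n'$ about $x_n$ to write $X=2\int_0^{d_n}(\sin^2\three 1/s-\sin^2\three 1/d_n)\,ds$ together with a cross term and a squared term. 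The latter two are each dominated by a multiple of $\int_0^{d_n}((\two 1/s)(\one 1/s))^{-1}\,ds$, which, since $s\mapsto((\two 1/s)(\one 1/s))^{-1}$ is increasing on $(0,d_n]$ and $d_n\le T=e^{-e}/2$ forces $\one 1/d_n>1$, is at most $d_n/\two 1/d_n$. For the main term, the elementary estimate $|\sin^2 a-\sin^2 b|\le|a-b|$ together with $\three 1/s-\three 1/d_n=\int_s^{d_n}(\sigma(\one 1/\sigma)(\two 1/\sigma))^{-1}\,d\sigma$ and an interchange of the order of integration give $\int_0^{d_n}(\three 1/s-\three 1/d_n)\,ds=\int_0^{d_n}((\one 1/\sigma)(\two 1/\sigma))^{-1}\,d\sigma\le d_n/\two 1/d_n$. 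Summing the three contributions yields $X<\tfrac{40d_n}{\two 1/d_n}$ with room to spare.

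Combining everything: if $X>0$ then $\int_{H_n}(l_n')^2-(w_n')^2=-\alpha_n^2 X>-4X>-\tfrac{160d_n}{\two 1/d_n}$, while if $X\le 0$ this quantity is $\ge 0$; either way it is $>-\tfrac{160d_n}{\two 1/d_n}$. Adding the bound $\int_{H_n}(u')^2-(l_n')^2\ge 2(I_{n,r}-I_{n,l})$ from the first paragraph gives the asserted strict inequality, the strictness coming precisely from $\alpha_n<2$, which leaves substantial slack in the constant $160$.

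I expect the main obstacle to be the estimate $X<\tfrac{40d_n}{\two 1/d_n}$: the integrand $(\tw_n')^2$ oscillates and is far from monotone, so one has to isolate the slowly varying ``amplitude'' $\sin^2\three 1/s$, control it by differentiating $\three 1/s$ and interchanging integrals, and separately dominate the correction terms arising from the second summand of~\eqref{lamda}; keeping every iterated logarithm bounded away from $0$ throughout is exactly what the numerology $T=e^{-e}/2$ (as exploited in~\eqref{one}) provides.
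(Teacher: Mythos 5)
Your argument is correct and follows essentially the same route as the paper's: both replace $w_n$ by the chord $l_n$ on $H_n$, extract the boundary terms $2(I_{n,r}-I_{n,l})$ from the inequality $(u')^2-(l_n')^2\ge 2(u'-l_n')l_n'$ together with $l_n=w_n$ at $x_n\pm d_n$, and absorb the chord-replacement error via $\alpha_n^2<4$ and a bound of the form $\frac{40d_n}{\two 1/d_n}$ on the untranslated quantity. The only divergence is in how that error is estimated---the paper bounds $|\tl_n'-\tw_n'|$ pointwise by the mean value theorem and splits the interval at $d_n/\one 1/d_n$, whereas you expand the squares and use the bound $|\sin^2 a-\sin^2 b|\le|a-b|$ with Fubini and monotonicity, in fact obtaining a sharper constant---but this is a difference of computational detail, not of approach.
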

\begin{proof}
We want to use the following estimate, replacing $w_n$ with the line $l_n$ and estimating the error:
\begin{align}
\int_{H_n} (u')^2 - (w_n')^2 & =  \int_{H_n} \big((u')^2 - (l_n')^2 \big)+ \int_{H_n}\big((l_n')^2 - (w_n')^2 \big)\nonumber\\
& \geq  \int_{H_n} \big( (u')^2 - (l_n')^2 \big) - \int_{H_n} |(l_n')^2 - (w_n')^2| .\label{linetrick}
\end{align}
Since $w_n' = \alpha_n \tw_n'$ and $l_n'= \alpha_n \tl_n'$ on $H_n$, a factor of $|\alpha_n^2|\leq 4$ comes out of the second, error term, so  we can just estimate this term in the case $n=0$; the case of general $n$ is just a translation of this base case.  We drop the index 0 from the notation.

Observe that for $t > 0$, we have
$$
\frac{d}{dt}\left(\sin \three 1/|t|\right) = -\frac{\cos \three 1/|t|}{t(\two 1/|t|)( \one 1/|t|)},
$$
so
$$
\left| \frac{d}{dt}\left(\sin \three 1/|t|\right)\right| \leq \frac{1}{t(\two 1/|t|)( \one 1/|t|)}.
$$
Hence by applying the mean value theorem we can see  for $0 < t < d$, recalling~\eqref{ix} and~\eqref{lamda}, that 
\begin{align}\lefteqn{
|\tl' - \tw'(t)|}\nonumber \\
 & = \bigg| (\sin \three 1/d)  - \left( ( \sin \three 1/|t|) - \frac{\cos \three 1/|t|}{(\two 1/|t|)(\one 1/|t|)}\right) \bigg| \nonumber\\
 & \leq |(( \sin \three 1/d) - ( \sin \three 1/|t|))| + \frac{1}{(\two 1/|t|)(\one 1/|t|)} \label{tln'-twn'}\\
& \leq \frac{(d-t)}{t(\two 1/d)( \one 1/d)} + \frac{1}{(\two 1/d)(\one1/d)}\nonumber \\
&= \frac{d}{t (\two 1/d)(\one 1/d)}.\nonumber
\end{align}
Then for $t \in (\frac{d}{\one 1/ d}, d)$,  we have
$$
|\tl' - \tw'(t)| < \frac{1}{\two 1/d };
$$
the function $\tw$ oscillates slowly enough that a good estimate for the discontinuity of the derivative holds on an interval in the domain of integration large enough in measure.  Since $\tw'$ is even, we can estimate as follows, using
~\eqref{modtln'} and~\eqref{lipwn}:
 \begin{align}
 \int_{H} |(\tl')^2 - (\tw')^2|  &= 2 \int_0^d  | \tl' - \tw ' ||\tl' + \tw'| \nonumber\\
 &\leq  8 \left( \int_0^{\frac{d}{\one 1/d}} |\tl' - \tw'|  + \int_{\frac{d}{\one 1/d}}^d |\tl' - \tw'|\right)\nonumber \\
 &<  8\left(\frac{4d}{\one1/d} + \int_{\frac{d}{\one 1/d}}^d \frac{1}{\two 1/d}\right)\nonumber \\
& \leq 8 \left( \frac{4d}{\one 1/d} + \frac{d}{\two 1/d}\right) \nonumber\\
& \leq \frac{40d}{\two 1/d}\label{ln'2-wn'2}.
\end{align}

By~\eqref{triv} we have
$$
\int_{H_n} \big((u')^2 - (l_n')^2\big)  \geq    2  l_n' [ u - l_n]_{x_n-d_n}^{x_n +d_n} =  2(I_{n,r} - I_{n,l}).
 $$  Putting this and~\eqref{ln'2-wn'2} into~\eqref{linetrick} gives the result.
\end{proof}
 An estimate established in the preceding proof also gives easily the following important result. The errors we incur in our boundary terms by introducing a jump discontinuity in the derivative of our new function $l_n$ are sufficiently small; they can be controlled by the integral over $H_n = [x_n - d_n, x_n + d_n]$ of a continuous function in $c_n \geq d_n$ taking value $0$ at $x_n$.
\begin{lemma}\label{partserror}
Let $n \geq 0$.  Then
$$
|I_{n,r} - E_{n,r}| + |I_{n, l} - E_{n, l}| < \frac{20c_n}{(\one 1/ c_n)(\two 1/ c_n)}.
$$
\end{lemma}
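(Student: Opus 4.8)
The plan is to reduce the statement to two elementary one-variable estimates. Starting from~\eqref{pil} and~\eqref{pir} and pulling out the common factor $|\alpha_n|$, one has
\[
|I_{n,l}-E_{n,l}|+|I_{n,r}-E_{n,r}| = |\alpha_n|\Big(|v_n(x_n-d_n)|\,|\tl_n'-\tw_n'(x_n-d_n)| + |v_n(x_n+d_n)|\,|\tl_n'-\tw_n'(x_n+d_n)|\Big),
\]
and $|\alpha_n|<2$ by~\eqref{wn=twn}, so it is enough to prove the two bounds $|\tl_n'-\tw_n'(x_n\pm d_n)|\le 1/\big((\two 1/c_n)(\one 1/c_n)\big)$ and $|v_n(x_n\pm d_n)|<5c_n$. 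The first is, up to evaluating at an endpoint, exactly the estimate already isolated as~\eqref{tln'-twn'} inside the proof of Lemma~\ref{u'-wn'overHn}; the second is the one where the constant has to be extracted carefully, and is the step I expect to be the real obstacle.

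For the jump in the derivative, I would use that $\tw'$ is even (the observation after~\eqref{lamda}), so that $\tw_n'(x_n-d_n)=\tw_n'(x_n+d_n)=\tw'(d_n)$; then~\eqref{ix} gives $\tl_n'=\sin\three 1/d_n$, and~\eqref{lamda} evaluated at $d_n$ gives
\[
|\tl_n'-\tw_n'(x_n\pm d_n)| = \frac{|\cos\three 1/d_n|}{(\two 1/d_n)(\one 1/d_n)} \le \frac{1}{(\two 1/d_n)(\one 1/d_n)}.
\]
Since $d_n\le c_n\le T<e^{-1}$ (because $H_n\subseteq\tJ_n$), and since $x\mapsto 1/\big((\two 1/x)(\one 1/x)\big)$ is increasing on $(0,e^{-1})$ (both factors of the denominator decrease as $x$ grows), this last quantity is at most $1/\big((\two 1/c_n)(\one 1/c_n)\big)$, as required.

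For the bound on $|v_n(x_n\pm d_n)|$ I would show $|v_n(x_n+d_n)|<5c_n$, the case of $x_n-d_n$ being entirely symmetric. If $x_n+d_n\notin J_n$ --- equivalently $d_n\ge b_n$ --- then~\eqref{vileq5offJn} applied with $i=n$ gives immediately $|v_n(x_n+d_n)|<5|d_n|\le 5c_n$. If $x_n+d_n\in J_n$, so $0<d_n<b_n\le c_n$, I would write $v_n = (u-w(x_n)) - (w_n-w(x_n))$ and bound the two pieces: the $2$-Lipschitz bound for $u$ on $J_n$ from~\eqref{u'leq2onJ} together with the endpoint identity~\eqref{delta} (at $x=x_n$) gives $|u(x_n+d_n)-w(x_n)|\le 3b_n+2(b_n-d_n)=5b_n-2d_n$, while on $H_n=[x_n-d_n,x_n+d_n]\subseteq[x_n-\tau_n,x_n+\tau_n]$ we have $w_n=\alpha_n\tw_n+\beta_n$ with $\beta_n=w_n(x_n)=w(x_n)$ (by~\eqref{wn=twn} and~\eqref{wfixxi}) and $|\tw_n(x_n+d_n)|=|\tw(d_n)|\le d_n$, so $|w_n(x_n+d_n)-w(x_n)|<2d_n$. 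Adding the two estimates, the terms $\pm 2d_n$ cancel and $|v_n(x_n+d_n)|<5b_n\le 5c_n$. The deliberate cancellation of the $d_n$-terms is what keeps this at $5c_n$ rather than $7c_n$, and organising the estimate so that this happens is the one genuinely delicate point of the argument.

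Putting the three facts together,
\[
|I_{n,l}-E_{n,l}|+|I_{n,r}-E_{n,r}| < 2\big(5c_n+5c_n\big)\cdot\frac{1}{(\one 1/c_n)(\two 1/c_n)} = \frac{20c_n}{(\one 1/c_n)(\two 1/c_n)},
\]
the inequality being strict because $|v_n(x_n\pm d_n)|<5c_n$ strictly. Everything except the constant-chasing in the third paragraph is a transcription of bounds already present in the excerpt.
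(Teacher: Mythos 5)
Your argument is correct and is essentially the paper's own proof: both reduce the claim to the factor $|\alpha_n|<2$, the slope-jump bound $|\tl_n'-\tw_n'(x_n\pm d_n)|\le 1/\big((\two 1/d_n)(\one 1/d_n)\big)\le 1/\big((\two 1/c_n)(\one 1/c_n)\big)$ (the paper cites~\eqref{tln'-twn'} at $t=d$, you recompute it directly from~\eqref{lamda} and~\eqref{ix}, which is the same thing), and the value bound $|v_n(x_n\pm d_n)|\le 5c_n$, which the paper gets from convexity of $u$ on $J_n$ plus $\mathrm{Lip}(w_n)<2$ while you get it from the stated conclusion~\eqref{u'leq2onJ} of Lemma~\ref{lemma1} together with the endpoint identity --- an equally valid and, since convexity is only established inside the proof of Lemma~\ref{lemma1}, marginally more self-contained variant. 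The only slip is the parenthetical claim $c_n\le T$: in general one only has $c_n<2T=e^{-e}$, but as this is still below $e^{-1}$ the monotonicity step (and hence the proof) is unaffected.
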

\begin{proof}

We just have to estimate $|v_n (x_n \pm d_n)|$.  Suppose $u(x_n) > w(x_n)$; the argument for $u(x_n)<w(x_n)$ is similar.  Suppose also $b_n \geq a_n$, so $c_n = b_n$.  The case $a_n > b_n$ is similar.  Then $u(t) \leq u(x_n + b_n)  
$ by convexity of $u$, for all $t \in J_n$.  If $x_n - d_n \notin J_n$, then~\eqref{vileq5offJn} gives us the immediate estimate  $|v_n (x_n - d_n)| \leq 5d_n\leq 5 b_n$ since $d_n \leq b_n$.  If $x_n - d_n \in J_n$, then we can argue that, since certainly $x_n + d_n \in J_n$,
$$
w(x_n) < w(x_n) + 3 d_n \leq u (x_n \pm d_n) \leq  u(x_n + b_n) = w(x_n) + 3b_n
$$
thus
$$
0 < u(x_n \pm d_n) - w(x_n) \leq 3 b_n.
$$
Hence using~\eqref{wfixxi} and~\eqref{lipwn}, and since $d_n \leq b_n$,
\begin{align*}
 |v_n (x_n \pm d_n)| &\leq |u(x_n \pm d_n) - w(x_n)| + |w_n(x_n) - w_n(x_n \pm d_n)|\\
& \leq 3 b_n + 2d_n \\
&\leq 5 b_n.
\end{align*}

The result then follows  using the estimate~\eqref{tln'-twn'} for $t=d$ in~\eqref{pir} and~\eqref{pil}, and since $|\alpha_n | < 2$ and $d_n \leq c_n$.
\end{proof}
We now combine our estimates for $\mathscr{L}_n$ across the whole domain $[-T,T]$, integrating by parts off $\bigcup_{i=1}^n H_i$ and using the above estimate on each $H_i$.  We work with simplifying assumptions implying the relevant intervals do not overlap.  We discuss later how to deal with the failure of these assumptions.
\begin{lemma}\label{ngood}
Suppose  $n \geq 0$ is such that for all $0 \leq j \leq n$,
\begin{gather}
\tJ_k \cap Y_j = \emptyset\ \textrm{ for all $ 0 \leq k < j$; and} \label{mu}\\  \tJ_j \subseteq Y_j. \label{xi}
\end{gather}
Then
$$
  \scrl_n (u) - \scrl_n (w_n) \geq  \sum_{i=0}^n \left(\frac{c_i}{\two 1/ c_i}\right)  + \int_{[-T,T] \backslash \bigcup_{i=0}^n H_i} |v_n|.
  $$
\end{lemma}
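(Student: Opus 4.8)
The plan is to estimate $\scrl_n(u) - \scrl_n(w_n)$ by splitting the domain $[-T,T]$ into the pieces $H_i$ (for $0 \le i \le n$) and the complement $[-T,T] \setminus \bigcup_{i=0}^n H_i$, handling each $H_i$ by Lemma~\ref{u'-wn'overHn} and the complement by integration by parts. On the complement, $w_n$ coincides with one of the smooth translated functions near each relevant point (by hypotheses~\eqref{mu} and~\eqref{xi} the intervals $\tJ_i$ are contained in $Y_i$ and disjoint from the earlier $Y_j$, so the $H_i$ are pairwise disjoint and each sits inside the region where $w_n$ is a scaled copy of $\tw_i$), and on the rest $w_n$ has locally Lipschitz derivative by~\eqref{wn'lip}. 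So first I would write, using~\eqref{phi0is0} and the trivial inequality~\eqref{triv} with $\bar u = u$, $\bar w = w_n$,
$$
\scrl_n(u) - \scrl_n(w_n) = \int_{-T}^{T}\big((u')^2 - (w_n')^2\big) + \int_{-T}^{T}\phi(t, v_n) \ge \int_{-T}^T 2 v_n' w_n' + \int_{-T}^T \phi(t, v_n)
$$
on the complement of $\bigcup H_i$, and keep the sharper Lemma~\ref{u'-wn'overHn} estimate on each $H_i$.

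Next I would integrate by parts on each connected component of $[-T,T] \setminus \bigcup_{i=0}^n H_i$. Since $w_n'$ is locally Lipschitz there, $\int 2 v_n' w_n' = [2 v_n w_n']_{\partial} - \int 2 v_n w_n''$. The boundary terms collect, at each endpoint $x_i \pm d_i$ of $H_i$, precisely the quantities $E_{i,l}, E_{i,r}$ (the outer boundary terms at $\pm T$ vanish because $v_n(\pm T) = u(\pm T) - w_n(\pm T) = 0$). Combining with Lemma~\ref{u'-wn'overHn}, whose boundary terms are the $I_{i,l}, I_{i,r}$, the total boundary contribution is $\sum_i \big(2(I_{i,r}-I_{i,l}) + 2(E_{i,r} - E_{i,l})\big)$ — wait, more carefully: the $H_i$ contribute $2(I_{i,r}-I_{i,l})$ and the complement contributes, at those same endpoints but with outward orientation reversed, $-2(E_{i,r}-E_{i,l})$, so the net is $\sum_i 2\big((I_{i,r}-E_{i,r}) - (I_{i,l}-E_{i,l})\big)$, which is controlled in absolute value by $\sum_i \tfrac{40 c_i}{(\one 1/c_i)(\two 1/c_i)}$ via Lemma~\ref{partserror}. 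The $-\int 2 v_n w_n''$ term over the complement is bounded by $\int \psi^2$-type weight: on the part of each $Y_i$ outside $H_i$ one has $|w_n''| = |\alpha_i \tw_i''| \le 2|\tw_i''|$ and elsewhere $|w_n''|$ is small, and $|v_n| \le 5|t-x_i|$ off $J_i$ by~\eqref{vileq5offJn}, so this is dominated by the $\psi_i^2 = 3 + 4|\tw_i''|$ part of $\tp_i^2(t,v_n)$ already sitting inside $\int \phi(t,v_n)$.

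What survives after these cancellations and domination steps is, on each $H_i$, a negative error $-\tfrac{160 d_i}{\two 1/d_i}$ from Lemma~\ref{u'-wn'overHn}, a negative error $-\tfrac{40 c_i}{(\one 1/c_i)(\two 1/c_i)}$ from the parts terms, against the positive mass $\int_{\tJ_i} \tp_i^1(t,v_n) \ge \tfrac{201 c_i}{\two 1/c_i}$ from Lemma~\ref{intphibig}; since $d_i \le c_i$ and $\tfrac{1}{\one 1/c_i} \le e^{-1}$ by~\eqref{i}, the sum $\tfrac{201}{\two 1/c_i} - \tfrac{160}{\two 1/c_i} - \tfrac{40 e^{-1}}{\two 1/c_i} > \tfrac{1}{\two 1/c_i}$, giving the claimed $\sum_i \tfrac{c_i}{\two 1/c_i}$. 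The remaining nonnegative terms are $\int_{[-T,T]\setminus\bigcup H_i} \phi(t,v_n)$, of which I retain the $\psi^1$-generated piece $\int |v_n|$-type lower bound: on the complement, wherever $v_n$ is not tiny the weight $\tp_i^1$ contributes at least $\psi_i^1(t)|v_n| \gtrsim |v_n|$ once the $\psi^2$ part has been spent, yielding $\int_{[-T,T]\setminus\bigcup H_i}|v_n|$. The main obstacle is the bookkeeping of exactly which portion of the total weight $\phi(t,v_n) = \sum_i (\tp_i^1 + \tp_i^2)(t,v_n)$ is consumed dominating $2|v_n w_n''|$ versus what is left to produce the $\int |v_n|$ term and the $\tfrac{c_i}{\two 1/c_i}$ terms — keeping the constants straight (the $402$ in $\psi^1$, the $3+4|\tw''|$ in $\psi^2$, the $5|t-x_i|$ cone bound) is where the real care lies, but each individual estimate is already packaged in the preceding lemmas.
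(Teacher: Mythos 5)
Your global skeleton is the paper's: treat each $H_i$ with Lemma~\ref{u'-wn'overHn}, integrate by parts on the complement, cancel boundary terms against Lemma~\ref{partserror}, and beat the $160$ and $40e^{-1}$ errors with the $201$ of Lemma~\ref{intphibig} (your sign bookkeeping and the final arithmetic with $d_i\le c_i$ and \eqref{i} are correct). The genuine gap is at the step where you dominate $2|v_nw_n''|$ by the weight on $[-T,T]\setminus\bigcup_{i=0}^nH_i$. You assert that on the part of $Y_i$ outside $H_i$ one has $|w_n''|=|\alpha_i\tw_i''|$ and that ``elsewhere $|w_n''|$ is small''. Neither holds: $w_n$ is a scaled copy of $\tw_i$ only on $[x_i-\tau_i,x_i+\tau_i]$, while on $Y_i\setminus[x_i-\tau_i,x_i+\tau_i]$ one has $w_i''=w_{i-1}''+g_l'$ (or $+g_r'$, or $0$), and $|w_{i-1}''|$ there is controlled only by the large constant $K_i$ from~\eqref{wn''bd}; it is neither small nor matched by $\psi_i^2$ alone. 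The paper's proof requires exactly here the inductive accumulation across scales, $\sum_{i\in\mathcal{I}_n(t)}\psi_i^2(t)\geq 2|w_n''(t)|+1+2^{-(n-1)}$ a.e.\ (its~\eqref{omicron}), proved by passing to the deepest index $i$ with $t\in Y_i$: if $t$ lies in the copied region, $\psi_i^2=3+4|\tw_i''|$ covers $2|\alpha_i\tw_i''|$ with margin; if not, one invokes $|w_i''|\leq|w_{i-1}''|+2^{-i}$ from~\eqref{wn''-wn-1''} and the hypothesis one level down, the $2^{-i}$ losses being absorbed by the margin in the constant $3$. Without this (or an equivalent device) the domination you need simply does not follow from the pointwise facts you cite; acknowledging that ``the bookkeeping is where the real care lies'' does not supply it, and this bookkeeping is the heart of the lemma.

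Relatedly, you source the surplus $\int|v_n|$ from the $\psi^1$ part ``once the $\psi^2$ part has been spent'', which is backwards and not justifiable from the stated properties: the $\psi^1$ weight is already fully committed to the concentrated mass of Lemma~\ref{intphibig} on $\tJ_i$ (on $\tJ_i\setminus H_i$ you would be double counting it), and for $t\notin Y_j$ the weight $\tp_j^1(t,\cdot)$ saturates at the value $5\psi_j^1(x_j+T_j)T_j$, which the construction (where $T_j$ may be chosen arbitrarily small) gives you no lower bound for in terms of $|v_n|$. In the paper the term $\int_{[-T,T]\setminus\bigcup H_i}|v_n|$ comes instead from the constant $3$ in $\psi^2=3+4|\tw''|$: after the accumulated $4|\tw''|$ part has paid for $2|w_n''|$, at least $1\cdot|v_n|$ remains, using that $\tp_j^2(t,v_n)=\psi_j^2(t)|v_n|$ in the relevant region because the cone bound~\eqref{vileq5offJn} keeps $v_n$ in the linear regime of the weight.
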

\begin{proof}
By~\eqref{wn=wn-1} and assumption~\eqref{mu} we have $w_j = w_k$ on $\tJ_k$ for all
$ 0 \leq k < j \leq n$, in particular
\beq
 w_n = w_k,\ w_n' = w_k'\ \textrm{and}\ w_n'' = w_k''\ \textrm{(wherever both sides exist)}\ \textrm{on $\tJ_k$}.\label{wn=wk}
 \eeq  Also, by assumptions~\eqref{xi} and~\eqref{mu} together we have that for $0 \leq k < j \leq n$
$$
\tJ_k \cap \tJ_j \subseteq \tJ_k \cap Y_j = \emptyset,
$$
 i.e. the   $\{\tJ_i\}_{i=0}^n $
 are pairwise disjoint.

Now, let $0 \leq i \leq n$.    We see, using~\eqref{triv}, that
\begin{align*}
\lefteqn{
\int_{\tJ_i} \big((u')^2 + \phi(t, v_i) - (w_i')^2\big)
}\\
& =  \int_{\tJ_i} \phi(t, v_i) + \int_{\tJ_i \backslash H_i} \big((u')^2  - (w_i')^2\big)  + \int_{H_i} \big((u')^2  - (w_i')^2\big)  \\
& \geq  \int_{\tJ_i} (\phi^1 (t, v_i) + \phi^2 (t, v_i) ) + \int_{\tJ_i \backslash H_i} 2v_i' w_i' +  \int_{H_i} \big((u')^2  - (w_i')^2\big)\\
& \geq  \int_{\tJ_i \backslash H_i} (\phi^2(t, v_i) + 2v_i' w_i') + \int_{\tJ_i} \phi^1(t, v_i) + \int_{H_i} \big((u')^2  - (w_i')^2\big).\end{align*}
Now, by Lemma~\ref{intphibig} (note this applies by assumption~\eqref{xi}) and Lemma~\ref{u'-wn'overHn}, and since $c_i \geq d_i$, \begin{align*} \int_{\tJ_i} \phi^1 (t, v_i) + \int_{H_i} ((u')^2 - (w_i')^2) & \geq  \int_{\tJ_i} \tp_i^1(t, v_i) + \int_{H_i} ((u')^2 - (w_i')^2) \\
& \geq  \frac{41c_i}{\two 1/c_i} + 2(I_{i,r} - I_{i,l}).
\end{align*}
So combining we have
\begin{equation}
 \int_{\tJ_i} (u')^2 + \phi(t, v_i) - (w_i')^2 \geq   \frac{41c_i}{\two 1/c_i} + 2(I_{i,r} - I_{i,l}) + \int_{\tJ_i \backslash H_i} (\phi^2(t, v_i) + 2v_i' w_i').\label{Jicomplete2}
 \end{equation}

Now, for any $t \in [-T,T]$, write $\mathcal{I}_n (t) = \{ i = 0, \ldots, n : t \in Y_i\}$.
We show by an easy induction that for almost every $ t \in [-T,T]$,
\beq
\sum_{i \in \mathcal{I}_n(t)}\psi_i^2(t) \geq 2|w_n''(t)| + 1 + 2^{-(n-1)}\label{omicron}.
\eeq
For $n = 0$, we have by definition that for all $ t \neq x_0$,
$$
\psi_0^2 (t) =  3 + 4|w_0''(t)| \geq 3 + 2|w_0''(t)|
$$
as required.  Suppose the result holds for all $0 \leq i \leq n-1$, where $ n \geq 1$.  Let $i = i(n,t) \leq n$ denote the greatest index in $\mathcal{I}_n (t)$, i.e. the greatest index $i$ such that $t \in Y_i$.
By~\eqref{wn=wn-1} we have $w_n''(t) = w_{i}''(t)$ whenever both sides exist, i.e. almost everywhere.  If $t \in (x_{i} - \tau_{i}, x_{i} + \tau_{i})$, then $w_{i}''(t) =  \alpha_i\tw_{i} ''(t)$ by~\eqref{wn=twn}, and by definition, for $t \neq x_{i}$,
$$
\sum_{j \in \mathcal{I}_n (t)} \psi_j^2(t) \geq \psi_{i}^2 (t) = 3 + 4|\tw_{i}''(t)| \geq 1 + 2^{-(n-1)} + 2|\alpha_i \tw_i''(t)|
$$
 as required.  If $t \notin [x_{i} - \tau_{i}, x_{i} + \tau_{i}]$ (note then necessarily $i \geq 1$ since $\tau_0 = T_0 = \nolinebreak T$), then $|w_{i} ''(t)| \leq |w_{i-1}''(t)| + 2^{-i}$ almost everywhere by \eqref{wn''-wn-1''} so by inductive hypothesis
 \begin{align*}
  \sum_{j \in \mathcal{I}_n(t)} \psi_j^2 (t) & \geq  \sum_{j \in \mathcal{I}_{i-1}(t)} \psi_j^2 (t) \\
& \geq  2 |w_{i-1}''(t)| + 1 + 2^{-((i-1)-1)} \\
& \geq  2|w_{i}''(t)| - 2 \cdot 2^{-i} + 1 + 2^{-((i-1)-1)} \\
& \geq  2|w_n''(t)| + 1 + 2^{-(n-1)}
\end{align*} as required for~\eqref{omicron}.

Given this, now consider $ t \notin \bigcup_{i=0}^n \tJ_i$.  Then since $\tJ_i \supseteq J_i$ for all $i \geq 0$,~\eqref{vileq5offJn} gives that $|v_n (t)| \leq 5|t - x_i|$ for all $0 \leq i \leq n$.  Therefore $\tp_i^2 (t, v_n) = |v_n| \psi_i^2 (t)$ by definition for $ i \in \mathcal{I}_n (t)$.  Thus almost everywhere, we have by \eqref{omicron} that
\begin{align*}
 \phi^2(t, v_n) - 2v_n w_n'' & \geq  \sum_{i \in \mathcal{I}_n (t)} (\tp_i^2 (t, v_n)) - 2|v_n| |w_n''| \\
& =  \sum_{i \in \mathcal{I}_n (t)} (\psi_i^2 (t)|v_n|) - 2|v_n| |w_n''| \\
& =  |v_n| \left(\sum_{i \in \mathcal{I}_n (t)} (\psi_i^2 (t)) - 2|w_n''(t)|\right) \\
& >  |v_n|.
\end{align*}

Now, let $ t \in \tJ_i \backslash H_i$.  Again note that we must have $i \geq 1$, since $\tau_0 = T_0 = \nolinebreak T$.  Since $\{\tJ_j\}_{j=0}^n $ are pairwise disjoint, we have that $ t \notin \tJ_j$ for $j < i$.  Hence, again by~\eqref{vileq5offJn}, $|v_i| \leq 5|t-x_j|$ for all $ j < i$, so by definition $\tp_j^2(t, v_i) = \psi_j^2 (t) |v_i|$ for $j \in \mathcal{I}_{i-1}(t)$.   Since $t \notin H_i$, we have $t \notin [x_i - \tau_i, x_i +,\tau_i]$, and hence that $|w_i''(t)| \leq |w_{i-1}''(t)| + 2^{-i}$ almost everywhere by~\eqref{wn''-wn-1''}.  Hence by \eqref{omicron} we have almost everywhere
\begin{align*}
\smash{\sum_{j \in \mathcal{I}_{i-1} (t)}\psi_j^2(t)} & \geq  1 + 2|w_{i-1}''(t)| + 2^{-(i-2)} \\
& \geq  1 + 2|w_i''(t)| - 2^{-(i-1)} + 2^{-(i-2)} \\
& >  1 + 2|w_i ''(t)|,
\end{align*}
and so
\begin{align*}
\phi^2(t, v_i) - 2v_iw_i'' &\geq \sum_{j \in \mathcal{I}_{i-1} (t)}(\tp_j^2 (t, v_i)) - 2|v_i ||w_i''| \\
&= \sum_{j \in \mathcal{I}_{i-1} (t)}(\psi_j^2 (t)|v_i|) - 2|v_i||w_i''|\\
& > |v_i|.
\end{align*}
Thus we have for almost every $t \notin \bigcup_{i=0}^n H_i$, noting the argument on $\tJ_i \backslash H_i$ above applies by~\eqref{wn=wk},  that
$$
\phi^2 (t, v_n) - 2v_n w_n'' > |v_n|,
$$
 and hence
 \beq
 \int_{[-T,T] \backslash \bigcup_{i=0}^n H_i} \left(\phi^2 (t, v_n) - 2v_n w_n''\right) \geq \int_{[-T,T] \backslash \bigcup_{i=0}^n H_i} |v_n|.\label{dblstar}
 \eeq

The reason for making this estimate is that we want to integrate $v_n' w_n'$ by parts on $[-T,T] \backslash \bigcup_{i=0}^n H_i$.  Under our standing assumption that $u(x_i) \neq w(x_i)$ for all $i \geq 0$, we see immediately that this is possible, since $v_n$ and $w_n'$ are bounded and absolutely continuous on $[-T,T] \backslash \bigcup_{i=0}^n H_i$ by~\eqref{wn'lip}, and thus $v_n w_n'$ is absolutely continuous on $[-T,T] \backslash \bigcup_{i=0}^n H_i$.  However,  in the general case that $w(x_j) = u(x_j)$ for some $0 \leq j \leq n$, and thus that $w_n (x_j) = u(x_j)$,  we have to argue a little more.

We claim that even in this general case the parts formula is still valid on $[-T,T] \backslash \bigcup_{i=0}^n H_i$, this is the assertion that $v_n w_n'$ can be written as an indefinite integral on  $[-T,T] \backslash \bigcup_{i=0}^n H_i$.   The argument of the preceding paragraph gives us that $v_n w_n'$ is absolutely continuous on subintervals bounded away from all $x_j$ with $u(x_j) = w(x_j)$.  Thus for each $0 \leq j \leq n$ such that $u(x_j) = w(x_j)$, and hence $H_j = \emptyset$, it suffices to check that $v_n w_n'$ can be written as an indefinite integral on a neighbourhood $U=(x_j - \delta, x_j + \delta) \subseteq [x_j - \tau_j, x_j + \tau_j]$ of $x_j$ not containing any other points $x_i$ for $0 \leq i \leq n$.  We check that
$$
\int_{x_j - \delta}^{x_j} (v_n w_n')'(s) \, ds =  - (v_n w_n')(x_j - \delta),
$$
the corresponding equality on the right of $x_j$ follows similarly.   We know that $v_n w_n'$ is absolutely continuous on subintervals of $U$ bounded away from $x_j$.  We claim that  $(v_n w_n')' \in L^1(U)$.  Given this, we can use the DCT to get the required result: since $v_n$ is continuous and $v_n (x_j) = 0$, we use~\eqref{lipwn} to see that
\begin{align*}
 - (v_nw_n')(x_j - \delta)& = \lim_{t \to x_j} ((v_n w_n')(t) - (v_n w_n')(x_j - \delta)) \\
&=\lim_{t \to x_j} \int_{x_j - \delta}^t (v_n w_n')'(s) \, ds\\
& = \int_{x_j - \delta}^{x_j}(v_n w_n')'(s) \, ds.
\end{align*}
To see $(v_n w_n')' \in L^1(U)$, note that since $u$ is by choice a minimizer for ($\star$), we have by~\eqref{lipw}
  $$
  \int_{-T}^{T} (u')^2 \leq \mathscr{L}(u) \leq \mathscr{L}(w) = \int_{-T}^{T} (w')^2 < \infty.
  $$
   Also, we can prove that $|u| \leq 3 |t - x_j|$ everywhere on $[-T,T]$, for example by noting the arguments used to prove~\eqref{smalloffjx} still apply when $J_j = \emptyset$.  So using~\eqref{wn=twn} and~\eqref{lipwn}, we have
 \begin{align*}
  \int_U |(v_n w_n')'| & \leq \int_U |v_n w_n''| + \int_U |v_n' w_n'| \\
 & \leq \int_U |uw_n''| + \int_U |w_n w_n''| + 2\left(\int_U |u'| + 2\right) \\
 & \leq  |\alpha_j|\left(3\int_U |(t-x_j) \tw_j''| + |\alpha_j|\int_U |\tw_j \tw_j''|\right) + 2\left(\int_U |u'| + 2\right)\\
 & \leq 2 \left(3 \sup_{t \in U}|(t - x_j)\tw_j''(t)|+ 2\sup_{t \in U}|(t - x_j)\tw_j''(t)|+ \int_U |u'| + 2\right)
\end{align*}
This right hand side is finite by~\eqref{wn''cont},~\eqref{alpha}, and the above note.

So, using~\eqref{triv}, and recalling that $v_n (\pm T) = 0$,
and using~\eqref{dblstar}, we have, integrating by parts as we now know we can do,  that
\begin{align}
\lefteqn{
 \int_{[-T,T] \backslash \bigcup_{i=0}^n H_i} \big(\phi^2 (t, v_n) + (u')^2 - (w_n')^2\big)
 }\nonumber\\
  & \geq  \int_{[-T,T] \backslash \bigcup_{i=0}^n H_i}\left( \phi^2 (t, v_n) + 2v_n' w_n'\right) \nonumber\\
& =  2[v_n w_n']_{[-T,T] \backslash \bigcup_{i=0}^n H_i} + \int_{[-T,T] \backslash \bigcup_{i=0}^n H_i}\left( \phi^2(t, v_n) - 2 v_n w_n''\right) \nonumber\\
& = -2 \sum_{i=0}^n [v_i w_i']_{x_i - d_i}^{x_i + d_i} + \int_{[-T,T] \backslash \bigcup_{i=0}^n H_i}\left( \phi^2 (t, v_n) - 2v_n w_n''\right) \nonumber\\
& \geq  -2 \sum_{i=0}^n (E_{i, r} - E_{i, l}) + \int_{[-T,T] \backslash \bigcup_{i=0}^n H_i} |v_n|. \label{offHi}
\end{align}

So since $\{\tJ_i\}_{i=0}^n$ are pairwise disjoint, we see,  using~\eqref{phi0is0},~\eqref{wn=wk}, ~\eqref{Jicomplete2},~\eqref{offHi}, and Lemma~\ref{partserror}, that
\begin{align*}
 \lefteqn{
 \scrl_n (u) - \scrl_n (w_n)
 }
\\
 & =  \sum_{i=0}^n \int_{\tJ_i} \big((u')^2 + \phi(t, v_i) - (w_i')^2\big) + \int_{[-T,T] \backslash \bigcup_{i=0}^n \tJ_i}\big( (u')^2 + \phi(t, v_n) - (w_n')^2\big)\\
& \geq  \sum_{i=0}^n\left(\frac{41c_i}{\two 1/c_i} + 2(I_{i,r} - I_{i,l}) +  \int_{\tJ_i \backslash H_i} \big( (u')^2 + \phi^2(t, v_i) - (w_i')^2\big)  \right) \\&\phantom{\geq} {}+ \int_{[-T,T] \backslash \bigcup_{i=0}^n \tJ_i} \big( (u')^2 + \phi^2(t, v_n) - (w_n')^2\big) \\
& \geq  \sum_{i=0}^n \left(\frac{41c_i}{\two 1/c_i} + 2(I_{i,r} - I_{i,l})\right) + \int_{[-T,T] \backslash \bigcup_{i=0}^n H_i} \big( (u')^2 + \phi^2(t, v_n) - (w_n')^2\big)  \\
& =  \sum_{i=0}^n \left( \frac{41c_i}{\two 1/c_i} + 2 ((I_{i, r} - E_{i, r}) - (I_{i, l} - E_{i, l})) \right) + \int_{[-T,T] \backslash \bigcup_{i=0}^n H_i} |v_n| \\
& \geq  \sum_{i=0}^n \left( \frac{41c_i}{\two 1/c_i} - 2(|I_{i, r}- E_{i, r}| + |I_{i, l} - E_{i, l}|) \right) + \int_{[-T,T] \backslash \bigcup_{i=0}^n H_i} |v_n| \\
& =  \sum_{i=0}^n \left( \frac{c_i}{\two 1/c_i}\right) + \int_{[-T,T] \backslash \bigcup_{i=0}^n H_i} |v_n|.
\qedhere
 \end{align*}
\end{proof}

\begin{corollary}\label{cor}
Suppose for all $n \geq 0$ our assumptions \eqref{mu} and \eqref{xi} hold.
 Then
 $$
 \scrl(u) - \mathscr{L} (w)  \geq \sum_{i=0}^{\infty}\left( \frac{c_i}{\two 1/ c_i}\right) + \int_{[-T,T] \backslash \bigcup_{i=0}^{\infty}H_i}  |v| > 0.
 $$
\end{corollary}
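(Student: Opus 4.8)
The plan is to assemble the corollary from the two estimates already in hand --- the lower bound for the approximating functionals from Lemma~\ref{ngood}, and the control of the approximation error from Lemma~\ref{finitejump} --- and then to let $n\to\infty$.

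Fix $n\geq 0$. Lemma~\ref{finitejump} gives
\[
\scrl(u)-\scrl(w) > \big(\scrl_n(u)-\scrl_n(w_n)\big) - \frac{T_{n+1}^2}{2},
\]
and since assumptions~\eqref{mu} and~\eqref{xi} hold for all indices $j\leq n$, Lemma~\ref{ngood} applies and yields
\[
\scrl_n(u)-\scrl_n(w_n) \geq \sum_{i=0}^n \frac{c_i}{\two 1/c_i} + \int_{[-T,T]\backslash\bigcup_{i=0}^n H_i}|v_n|.
\]
Combining, for every $n\geq 0$ we have
\[
\scrl(u)-\scrl(w) \geq \sum_{i=0}^n \frac{c_i}{\two 1/c_i} + \int_{[-T,T]\backslash\bigcup_{i=0}^n H_i}|v_n| - \frac{T_{n+1}^2}{2}.
\]

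Next I would pass to the limit $n\to\infty$ on the right. By~\eqref{iv} we have $T_{n+1} < 2^{-(n+1)}$, so the error term vanishes; the partial sums $\sum_{i=0}^n c_i/(\two 1/c_i)$ are non-decreasing (all terms non-negative) and converge to $\sum_{i=0}^{\infty} c_i/(\two 1/c_i)\in[0,\infty]$. For the integral, recall from Lemma~\ref{wncvg} that $w_n\to w$ uniformly, hence $v_n=u-w_n\to u-w=v$ uniformly on $[-T,T]$, while the sets $A_n:=[-T,T]\backslash\bigcup_{i=0}^n H_i$ decrease to $A:=[-T,T]\backslash\bigcup_{i=0}^{\infty}H_i$. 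Writing
\[
\int_{A_n}|v_n| - \int_A |v| = \int_{A_n}\big(|v_n|-|v|\big) + \int_{[-T,T]}|v|\big(\mathds{1}_{A_n}-\mathds{1}_A\big),
\]
the first term is at most $2T\|v_n-v\|_{\infty}\to 0$, and the second tends to $0$ by monotone convergence since $\mathds{1}_{A_n}\downarrow\mathds{1}_A$ pointwise and $|v|$ is bounded. Hence $\int_{A_n}|v_n|\to\int_A|v|$, and passing to the limit gives
\[
\scrl(u)-\scrl(w) \geq \sum_{i=0}^{\infty}\frac{c_i}{\two 1/c_i} + \int_{[-T,T]\backslash\bigcup_{i=0}^{\infty}H_i}|v|.
\]
To see this is strictly positive, note that every summand and the integral are non-negative, so it suffices to find one strictly positive summand. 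Under the standing assumption $u(x_0)=u(0)\neq w(0)=w(x_0)$, so $x_0$ lies in $\{t:|u(t)-w(x_0)|>3|t-x_0|\}$, whence its component $J_0=(x_0-a_0,x_0+b_0)$ is a non-empty open interval and $c_0=\max\{a_0,b_0\}>0$. Moreover~\eqref{xi} forces $\tJ_0\subseteq Y_0=[-T,T]$, so $c_0\leq T=e^{-e}/2$ and hence $\two 1/c_0 \geq \two 1/T = \log(\log 2 + e)>0$; thus $c_0/(\two 1/c_0)>0$. (In the general situation where $u(x_j)=w(x_j)$ for some $j$, one runs the whole argument on each connected component of $[-T,T]\backslash\cl{\{x_n:u(x_n)=w(x_n)\}}$, each of which contains some $x_i$ with $u(x_i)\neq w(x_i)$, supplying a strictly positive contribution as above.)

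All the analytic content lies in Lemmas~\ref{ngood} and~\ref{finitejump}; the corollary is essentially bookkeeping. The only step needing genuine care --- and the one I would flag as the main (minor) obstacle --- is the interchange of limit and integration in $\int_{A_n}|v_n|$, where both the integrand and the region of integration vary with $n$; this is dispatched by combining the uniform convergence $v_n\to v$ with the monotonicity $A_n\downarrow A$, as indicated above.
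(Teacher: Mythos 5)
Your argument is essentially the paper's: the corollary is obtained by applying Lemma~\ref{ngood} for each $n$, using Lemma~\ref{finitejump} to convert this into a bound on $\scrl(u)-\scrl(w)$, and passing to the limit (the paper invokes the dominated convergence theorem where you spell out uniform convergence of $v_n\to v$ together with the monotone shrinking of the sets $[-T,T]\setminus\bigcup_{i=0}^n H_i$), and your strict-positivity argument via $c_0>0$ and $\two 1/c_0>0$ is correct under the standing assumption $u(x_n)\neq w(x_n)$ for all $n$. The only inaccuracy is your parenthetical on the general case: a connected component of $[-T,T]\setminus\cl{\{x_n: u(x_n)=w(x_n)\}}$ need not contain any $x_i$ with $u(x_i)\neq w(x_i)$, and the paper instead notes that if $u(x_n)=w(x_n)$ for every $n$ then all the $H_i$ are empty, so strict positivity comes from the integral term $\int_{-T}^{T}|v|>0$, since $u\neq w$ are continuous.
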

\begin{proof}  This follows from the preceding Lemma by the dominated convergence theorem, since $\mathscr{L}_n (u) - \mathscr{L}_n (w_n) \to \mathscr{L}(u) - \mathscr{L}(w)$ by Lemma~\ref{finitejump}.

We note that in the general case we do indeed have strict inequality, as is necessary for the contradiction proof.  If $u(x_n) \neq w(x_n)$ for some $n\geq 1$, then $c_n > 0$ and so the infinite sum is strictly positive.  If $u(x_n) = w(x_n)$ for all $ n\geq 1$, then $[-T,T] \backslash \bigcup_{i=0}^{\infty}H_i = [-T,T]$, so on the assumption that $u \neq w$, where both are continuous functions, the integral term must be strictly positive.
\end{proof}

The arguments of the previous lemma and its corollary relied on the intervals we have to give special attention, the $\tJ_j$, being small enough that they did not escape $Y_j$, or overlap with later $Y_k$ and hence possible $\tJ_k$.  The trick is now that should one of these assumptions fail, thus apparently making the proof more complicated,  in fact this means that we can ignore the modifications we made at stage $j$ and beyond.  That one of our assumptions fails for $j$ means that $\tJ_j$ is too large, which by the very definition of $\tJ_j$ implies the graph of $u$ is far away from that of $w$ on a set of large measure around $x_j$.  We have chosen our constants so that this large difference between $u$ and $w$ around $x_j$ gives enough weight to our Lagrangian that we can discard all modifications we made to $w_{j-1}$ and hence to $L_{j-1}$ and work just with these instead; the error so incurred is small enough that it is absorbed into this extra weight.  Very roughly, if $u$ misses $w$ at $x_j$ by an inconveniently large amount, then we don't have to worry about the fine detail of our variational problem at and beyond the scale $j$.

\begin{lemma}\label{lemma8}
Let $n\geq 1$ be such that assumptions~\eqref{mu} and~\eqref{xi} hold for $n-1$, but for some $0 \leq j < n$ we have $\tJ_j \cap Y_n \neq \emptyset$, i.e.~\eqref{mu} fails for $n$.
Then
$$
\scrl_{n-1}(u) - \scrl_{n-1}(w_{n-1}) \geq T_n^2.
$$
\end{lemma}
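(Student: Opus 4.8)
The plan is to feed the hypotheses straight into Lemma~\ref{ngood} at level $n-1$ and then extract the required bound from the single ``bad'' index $j$, using that the failure of the disjointness assumption forces $\tJ_j$ to be large.

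Since \eqref{mu} and \eqref{xi} hold for $n-1$, Lemma~\ref{ngood} applies with its parameter taken to be $n-1$, which gives
\[
\scrl_{n-1}(u) - \scrl_{n-1}(w_{n-1}) \geq \sum_{i=0}^{n-1} \frac{c_i}{\two 1/c_i} + \int_{[-T,T] \backslash \bigcup_{i=0}^{n-1} H_i} |v_{n-1}| \geq \frac{c_j}{\two 1/c_j},
\]
where $0 \leq j < n$ is the index for which $\tJ_j \cap Y_n \neq \emptyset$, and the second inequality simply discards the remaining (non-negative) summands and the integral. So it suffices to show $c_j / \two(1/c_j) \geq T_n^2$.

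To bound $c_j$ from below, pick any $t_0 \in \tJ_j \cap Y_n$; then $|t_0 - x_j| \leq c_j$ and $|t_0 - x_n| \leq T_n$, so since $j < n$ the definition of $\sigma_n$ and the triangle inequality give $c_j \geq |x_j - x_n| - |t_0 - x_n| \geq 2\sigma_n - T_n$, whence $c_j > \sigma_n > T_n$ by \ref{t2}. Moreover \eqref{xi} for $n-1$ gives $\tJ_j \subseteq Y_j$, so $c_j \leq T_j \leq T = e^{-e}/2 < e^{-1}$, and likewise $T_n < T < e^{-1}$; hence $\one(1/c_j)$ and $\one(1/T_n)$ both exceed $1$, and monotonicity of $\one$ and $\two$ on these ranges together with $c_j > T_n > 0$ yields $\two(1/c_j) \leq \two(1/T_n)$. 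Therefore
\[
\frac{c_j}{\two 1/c_j} \geq \frac{c_j}{\two 1/T_n} > \frac{T_n}{\two 1/T_n} \geq T_n^2,
\]
where the last inequality is \eqref{tau} applied to the point $x_n + T_n \in Y_n \subseteq [-T,T]$, giving $1/\two(1/T_n) \geq T_n$. Combining this with the first display proves the claim.

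The conceptual content is small: once \eqref{mu} fails at level $n$ the offending interval $\tJ_j$ must be much larger than $T_n$, so the term already produced by Lemma~\ref{ngood} suffices, and the delicate integration-by-parts work has all been done there. The only step to handle with care is the closing chain of iterated-logarithm inequalities, where every logarithm must be kept in the range where it is positive and increasing; this is precisely what the normalisation $T = e^{-e}/2$ guarantees, through estimates such as \eqref{one} and \eqref{tau}, so no real obstacle remains.
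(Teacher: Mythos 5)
Your proposal is correct and follows essentially the same route as the paper: apply Lemma~\ref{ngood} at level $n-1$, show that the failure of~\eqref{mu} forces $c_j \geq T_n$ (you get this directly from the triangle inequality and~\ref{t2}, the paper by a one-line contradiction), and then conclude via the iterated-logarithm bound~\eqref{tau}. The only cosmetic difference is that the paper applies~\eqref{tau} to $c_j$ itself, giving $c_j/\two(1/c_j) \geq c_j^2 \geq T_n^2$, whereas you route the estimate through $T_n$ with a monotonicity step; both are fine.
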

\begin{proof}
That~\eqref{mu} fails for $n$ implies that $c_j \geq T_n$, otherwise choosing $t \in \tJ_j \cap Y_n$ we would have by~\ref{t2} that
$$
 |x_n - x_j| \leq |x_n - t| + | t- x_j| \leq T_n + c_j < 2T_n < |x_n - x_j|.
$$
So, applying Lemma~\ref{ngood} to $n-1$ we see, using this fact, and~\eqref{tau}, that
\begin{equation*}
\scrl_{n-1}(u) - \scrl_{n-1}(w_{n-1})
 \geq  \frac{c_j}{\two 1/ c_j}
 \geq  c_j^2
 \geq  T_n^2.
\qedhere
 \end{equation*}
\end{proof}
\begin{lemma}\label{lemma9}
Let $ n\geq 1$ be such that assumption \eqref{mu} holds for $n$, assumption \eqref{xi} holds for $n-1$, but $\tJ_n \nsubseteq Y_n$, i.e.~\eqref{xi} fails for $n$.  Then
$$
\scrl_{n-1}(u) - \scrl_{n-1}(w_{n-1}) \geq T_n^2.
$$
\end{lemma}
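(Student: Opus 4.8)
The plan is to read off this bound from Lemma~\ref{ngood} applied at level $n-1$, using its integral term --- this is the exact analogue for the failure of~\eqref{xi} of the way Lemma~\ref{lemma8} handles the failure of~\eqref{mu}. First I would check that Lemma~\ref{ngood} does apply to $n-1$: assumption~\eqref{xi} holds for $n-1$ by hypothesis, and since assumption~\eqref{mu} holds for $n$ it holds \emph{a fortiori} for $n-1$. This yields
$$
\scrl_{n-1}(u) - \scrl_{n-1}(w_{n-1}) \geq \sum_{i=0}^{n-1}\frac{c_i}{\two 1/c_i} + \int_{[-T,T] \backslash \bigcup_{i=0}^{n-1}H_i} |v_{n-1}| \geq \int_{[-T,T] \backslash \bigcup_{i=0}^{n-1}H_i} |v_{n-1}|,
$$
and it remains only to bound this last integral below by $T_n^2$.

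Next I would unwind the hypothesis that~\eqref{xi} fails for $n$. Since $\tJ_n = [x_n - c_n, x_n + c_n]$ and $Y_n = [x_n - T_n, x_n + T_n]$, the failure $\tJ_n \nsubseteq Y_n$ says precisely that $c_n > T_n$; as $c_n = \max\{a_n, b_n\}$, at least one of $a_n$, $b_n$ exceeds $T_n$, and without loss of generality $b_n > T_n$, the case $a_n > T_n$ being identical after reflection about $x_n$. Then $[x_n, x_n + T_n] \subseteq [x_n, x_n + b_n]$, so~\eqref{upsilon2b}, applied with $i = n-1$, gives $|v_{n-1}(t)| \geq b_n$ for every $t \in [x_n, x_n + T_n]$.

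The only substantive point is that $[x_n, x_n + T_n]$ is disjoint from all the sets $H_0, \dots, H_{n-1}$, and here assumption~\eqref{mu} for $n$ enters: $H_i = \tJ_i \cap [x_i - \tau_i, x_i + \tau_i] \subseteq \tJ_i$, while taking $j = n$ in~\eqref{mu} gives $\tJ_i \cap Y_n = \emptyset$ for $0 \leq i < n$, so, as $[x_n, x_n + T_n] \subseteq Y_n$, we get $[x_n, x_n + T_n] \subseteq [-T,T] \backslash \bigcup_{i=0}^{n-1}H_i$. Hence
$$
\int_{[-T,T] \backslash \bigcup_{i=0}^{n-1}H_i} |v_{n-1}| \geq \int_{x_n}^{x_n + T_n} |v_{n-1}| \geq b_n T_n \geq T_n^2,
$$
which together with the first display gives the result. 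I do not expect any real obstacle: the failure of~\eqref{xi} at stage $n$ means exactly that $u$ stays a definite distance from $w$ on a set of measure $\geq T_n$ around $x_n$, and this discrepancy is already recorded by the integral term of Lemma~\ref{ngood} at level $n-1$; the work is purely the elementary disjointness bookkeeping just carried out.
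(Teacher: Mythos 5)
Your proposal is correct and follows essentially the same route as the paper: apply Lemma~\ref{ngood} at level $n-1$, drop the sum, note that~\eqref{mu} for $n$ makes $Y_n$ (hence $[x_n, x_n+T_n]$) disjoint from $\bigcup_{i=0}^{n-1}H_i$, and use~\eqref{upsilon2b} with the failure of~\eqref{xi} (so $c_n \geq T_n$) to bound $|v_{n-1}| \geq b_n \geq T_n$ there. The only cosmetic difference is your WLOG ($b_n > T_n$ directly, rather than $b_n \geq a_n$ first), which changes nothing.
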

\begin{proof}
We suppose $b_n \geq a_n$, so $c_n = b_n$.  The case $a_n > b_n$ differs only in trivial notation.  That~\eqref{xi} fails for $n$ implies that $b_n \geq T_n$.  That \eqref{mu} holds for $n$ implies
in particular that $Y_n \cap \bigcup_{i=0}^{n-1}H_i \subseteq Y_n \cap \bigcup_{i=0}^{n-1}\tJ_i = \emptyset$.  Thus by Lemma~\ref{ngood} for $n-1$,
$$
\scrl_{n-1}(u) - \scrl_{n-1}(w_{n-1})  \geq  \int_{[-T,T] \backslash \bigcup_{i=0}^{n-1}H_i}|v_{n-1}|  \geq  \int_{Y_n}|v_{n-1}| \geq  \int_{x_n }^{x_n + T_n}|v_{n-1}|.
 $$
  But the point is that $[x_n, x_n + T_n] \subseteq [x_n, x_n + b_n]$, so from~\eqref{upsilon2b} we have $|v_{n-1}| \geq b_n$ on $[x_n, x_n + T_n]$.  So we see
\begin{equation*}
 \scrl_{n-1}(u) - \scrl_{n-1}(w_{n-1})  \geq   \int_{x_n}^{x_n + T_n} b_n = T_n b_n \geq T_n^2.
\qedhere
 \end{equation*}
\end{proof}
 We can now conclude our proof that $w$ is the unique minimizer of ($\star$).  Choose the least $n \geq 0$ such that one of our crucial assumptions~\eqref{mu} or~\eqref{xi} fails.  We observe that then $ n\geq 1$ necessarily, since certainly $\tJ_0 \subseteq [-T,T]$.  If no such $n$ exists, we are in the situation of Corollary~\ref{cor} and we are done.

Suppose $n\geq 1$ is such that~\eqref{mu} fails for $n$.  Then we are in the situation of Lemma~\ref{lemma8} and we see by Lemma~\ref{finitejump} that
$$
 \scrl(u) - \scrl(w)  > \scrl_{n-1}(u) - \scrl_{n-1}(w_{n-1}) - \frac{T_n^2}{2}  \geq  \frac{T_n^2}{2}  >  0.
 $$

Suppose $n \geq 0$ is such that~\eqref{mu} holds for $n$ but~\eqref{xi} fails.  Then we are in the situation of Lemma~\ref{lemma9} and we see again by Lemma~\ref{finitejump} that
\begin{equation*}
 \scrl (u) - \scrl(w) >  \scrl_{n-1}(u) - \scrl_{n-1}(w_{n-1}) - \frac{T_n^2}{2}
 \geq  \frac{T_n^2}{2} >  0.
 \end{equation*}

\section{Singularity}
The extra oscillations we added in to $w_n$ are small enough in magnitude and far enough from $x_n$ to preserve the behaviour of $w$ as being like that of $w_n$ and hence $\tw_n$ around $x_n$.  In particular, the non-differentiability still holds.
\begin{proposition}\label{sing}
Let $n \geq 0$.  Then $\overline{D}w(x_n) \geq 1$ and $\underline{D}w(x_n) \leq -1$.
\end{proposition}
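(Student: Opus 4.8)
The plan is to show that in a neighbourhood of $x_n$ the function $w$ differs from $w_n$ by a quantity that is \emph{quadratically} small in the distance to $x_n$, so that the one-sided Dini derivatives of $w$ at $x_n$ coincide with those of $w_n$, hence of $\alpha_n\tw_n$, which oscillate with amplitude $\alpha_n\ge 1$.

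First I would establish
$$
|w(t) - w_n(t)| < \frac{2^{-n}}{320}\,|t - x_n|^2 \qquad \textrm{for all } t \in [-T,T] \textrm{ with } t \neq x_n.
$$
By~\eqref{wn=wn-1} each difference $w_i - w_{i-1}$ vanishes off $Y_i$, and by~\eqref{cvg} the series $\sum_{i>n}(w_i - w_{i-1})$ converges uniformly to $w - w_n$ with $\|w_i - w_{i-1}\|_\infty < 10R_i$; thus $w(t) - w_n(t) = \sum_{i>n,\ t\in Y_i}\big(w_i(t) - w_{i-1}(t)\big)$. If $i>n$ and $t\in Y_i$ then condition~\ref{t2} gives $T_i < \sigma_i \le |x_n - x_i|/2$, so $|x_n - x_i| > 2T_i$ and hence $|t-x_n| \ge |x_n - x_i| - |x_i - t| > 2T_i - T_i = T_i$. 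Condition~\ref{r3} gives $R_i < T_i^3/3200$ (bounding $2^{-i},\epsilon_i \le 1$ and $M_{i-1}\ge 1$), and $T_i^3 = T_i\cdot T_i^2 < T_i\,|t-x_n|^2$; together with $\sum_{i>n}T_i < \sum_{i>n}2^{-i} = 2^{-n}$ from~\eqref{iv}, summing over the relevant $i$ gives the displayed estimate.

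Next I would use that, by~\eqref{wn=twn} and the definition of $\tw$, for $t \in [x_n - \tau_n, x_n + \tau_n]\setminus\{x_n\}$ one has $w_n(t) = \alpha_n\tw_n(t) + \beta_n = \alpha_n(t-x_n)\sin\three 1/|t-x_n| + \beta_n$, whereas $w(x_n) = w_n(x_n) = \beta_n$ by~\eqref{wfixxi}. Therefore
$$
\frac{w(t) - w(x_n)}{t - x_n} = \alpha_n\sin\three 1/|t-x_n| + \frac{w(t) - w_n(t)}{t-x_n},
$$
and by the estimate above the last term has modulus at most $\tfrac{2^{-n}}{320}|t-x_n|$, so it tends to $0$ as $t\to x_n$. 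Since $|t-x_n| < T = e^{-e}/2$ forces $\three 1/|t-x_n| \to \infty$ as $t\to x_n$, we have $\limsup_{t\to x_n}\sin\three 1/|t-x_n| = 1$ and $\liminf_{t\to x_n}\sin\three 1/|t-x_n| = -1$. Choosing $t_k\to x_n$ with $\sin\three 1/|t_k - x_n| \to 1$ yields $\overline{D}w(x_n) \ge \alpha_n \ge 1$ (using $\alpha_n\in[1,2)$ from~\eqref{wn=twn}), and choosing $t_k\to x_n$ with $\sin\three 1/|t_k - x_n| \to -1$ yields $\underline{D}w(x_n) \le -\alpha_n \le -1$, which is the assertion.

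The crucial point — precisely why the oscillations inserted at the later stages do not destroy the singularity at $x_n$ — is that $w - w_n$ is of order $|t-x_n|^2$, not merely $|t-x_n|$, near $x_n$. This quadratic gain comes from condition~\ref{r3} (the cube of $T_i$ in the bound on $R_i$) combined with the fact, coming from~\ref{t2}, that the modifications made at stage $i$ live at distance at least $T_i$ from $x_n$; without it the error term in the difference quotient would be of the same order as the oscillation of $\tw_n$ that we are trying to detect, and the argument would only give $\overline{D}w(x_n) \ge 1 - o(1)$. Making this quadratic estimate precise is the only real work in the proof.
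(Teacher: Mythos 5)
Your proposal is correct and follows essentially the same route as the paper: both arguments use that any stage-$i$ modification with $i>n$ is supported in $Y_i$, which by~\ref{t2} lies at distance greater than $T_i$ from $x_n$, and is of size controlled by $R_i$ (via~\eqref{cvg} and~\ref{r3}), so that $w-w_n$ is negligible compared with $|t-x_n|$ near $x_n$, after which \eqref{wfixxi} and~\eqref{wn=twn} transfer the oscillation of $\alpha_n\tw_n$ to $w$. The only difference is bookkeeping: you sum the telescoping series to get a uniform quadratic bound, while the paper bounds the tail at the least contributing index by $20R_i\le 2^{-i}|t-x_n|$; either smallness estimate (anything $o(|t-x_n|)$, not necessarily quadratic) suffices.
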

\begin{proof}
Let $ t \in [-T,T]$, and let $m > n$.  Note that if $ t \in Y_i$ for $i > n$, we have by \ref{t2}
$$
|x_n - x_i| \leq |x_n - t| + |t-x_i| \leq |x_n - t| + T_i < |x_n - t| + |x_n - x_i|/2
$$
and hence, again by condition~\ref{t2} \beq T_i < |x_n - x_i|/2 < |x_n - t|.\label{x}\eeq Now let $ t \in [-T,T]$ be such that $|t-x_n| < T_m$.
Then for $n < i \leq m$, again by~\ref{t2} and since the $T_i$ are decreasing,
$$
|t- x_i|  \geq  |x_i - x_n| - |t - x_n|  >  2T_i - T_m  \geq 2T_i - T_i = T_i,
$$
so $t \notin Y_i$ for all $n < i \leq m$.

If $t \notin Y_i$ for any $i > n$ then $w(t) = w_n (t)$ by~\eqref{wn=wn-1}, and the following argument is trivial.  Otherwise choose least $i > n$ such that $ t \in Y_i$, so $w_n (t) = w_{i-1}(t)$.  Then by the above argument we must have $i > m$, and so by~\eqref{w-wn},~\ref{r3}, and~\eqref{x},
$$
|w(t) - w_n (t)| =  |w(t) - w_{i-1}(t)|\leq\|w-w_{i-1}\|_{\infty}\leq20R_i  < 2^{-i}\,T_i < 2^{-i} |t- x_n|.
$$
Hence  we have by \eqref{wfixxi}, and since $i>m$,
$$
\left|\frac{w(t) - w(x_n)}{t-x_n} - \frac{w_n (t) - w_n (x_n)}{t-x_n} \right|  =  \left| \frac{w(t) - w_n (t)}{t-x_n} \right| \leq 2^{-i}  < 2^{-m}.
$$
Hence by~\eqref{wn=twn} and definition of $\tw_n$,
\begin{align*}
\overline{D}w(x_n) = \overline{D}w_n(x_n) = \overline{D}\alpha_n \tw_n(x_n)& \geq 1; \textrm{and}\\ \underline{D}w(x_n) = \underline{D}w_n(x_n) = \underline{D}\alpha_n \tw_n(x_n) &\leq -1.
\qedhere
\end{align*}
\end{proof}

\section{Conclusion}
The precise statement of Theorem~\ref{main} can now be obtained by letting our sequence $\{x_n\}_{n=0}^{\infty}$ be an enumeration of the rationals in $(-T,T)$.  Define
$$
\Sigma = \{ x \in (-T,T) : \overline{D}w(x) \geq 1 \ \textrm{and}\ \underline{D}w(x) \leq -1\}.
$$
Then density of $\Sigma$ is immediate by Proposition~\ref{sing}.  That it is $G_{\delta}$ is standard: $\Sigma= \bigcap_{k=1}^{\infty} (\Sigma_k^{+} \cap \Sigma_k^{-})$ where
\begin{multline*}
\Sigma_{k}^{\pm} = \Bigg\{ t \in (-T,T) : \left|\frac{w(s) - w(t)}{s-t} - \pm1\right| < 1/k \\
 \textrm{for some $s \in [-T,T]$ such that $|t-s| < 1/k$}\Bigg\}
\end{multline*}
are open sets.  That $\Sigma$ is therefore second category follows by density and Baire's theorem.\qed
\section{Further results}

It is possible to perform exactly the same type of construction to produce a continuous Lagrangian with a minimizer $w$ of the associated variational problem which has $\overline{D}w(x_n) = +\infty$ and $\underline{D}w(x_n) = -\infty$ on a given countable set $\{x_n\}_{n= 0}^{\infty}$.  The minimizer is evidently no longer Lipschitz, and so the proofs are a little harder in  technicalities, but they are similar in spirit.  The function $\tw$ on which the construction is based is in this case $\tw(t) = t (\two 1/|t|) \sin \three 1/|t|$.

In preparation is a paper performing the construction in greater generality, with $\tw(t) = t f(t) \sin h(t)$, for appropriate $f, h$.

The example presented in the present paper illustrates the main ideas, without the extra technical complications of the stronger or more general cases.

\begin{acknowledgements}
This work is part of the PhD research done by the first named author under the supervision of the second.
\end{acknowledgements}

\bibliographystyle{plain}
\bibliography{thesisbib1}   

%
%

\end{document}